\newtheorem{theorem}{Theorem}[section]
\newtheorem{definition}[theorem]{Definition}
\newtheorem{lemma}[theorem]{Lemma}
\newtheorem{corollary}[theorem]{Corollary}
\newtheorem{remark}[theorem]{Remark}
\numberwithin{equation}{section}
\newcommand{\be}{\begin{equation}}
	\newcommand{\ee}{\end{equation}}
\newcommand{\bes}{\begin{equation*}}
	\newcommand{\ees}{\end{equation*}}
\providecommand{\abs}[1]{\vert#1\vert}
\providecommand{\norm}[1]{\Vert#1\Vert}
\providecommand{\pp}[1]{\langle#1\rangle}
\newcommand{\N}{\mathbf{varrho}}
\renewcommand{\geq}{\geqslant}
\renewcommand{\leq}{\leqslant}
\def\m1{\mathbf{1}}
\def\D{\mathbb{D}}
\def\N{\mathbb{N}}
\def\H{\mathcal{H}}
\newcommand\bb[1]{\left[#1\right]}
\newcommand\set[1]{\left\{#1\right\}}
\author{Ngartelbaye Guerngar\\
	University of North Alabama\\
	\and James McCormick\\ University of North Alabama\\}
\title{Distributed-order space-time fractional diffusions in bounded domains
	\date{}
}
\begin{document}
	
	\maketitle
	\begin{abstract}
	We provide explicit classical solutions and stochastic analogues for distributed-order space-time fractional diffusion equations on bounded domains with zero exterior boundary conditions. We also show that our results still hold when the mixing measure  in the distributed-order time-derivative is singular.

	\end{abstract}
	\newpage
	\section{Introduction}
	
	It is well known that fractional derivatives can be used to model time delays in a diffusion process. When the order of the fractional derivative is distributed over the unit interval, it is useful in modeling a mixture of delay sources. There is a plethora of literature devoted to the topic.
	
% It is well-known that the equation $\frac{\partial u(t,x)}{\partial t}=\Delta u(t,x)$ models a diffusion process, more precisely it governs a Brownian motion $B(t)$ with density $u(t,x)$ for which the square root scaling property--$u(t,x)=t^{-1/2}u(1, t^{-1/2}x)$--holds.

%	Equations involving the distributed order time fractional derivative have been studied recently in \cite{LiKoTo,MeerNaneVella}

	 Einstein in his paper  \cite{Einsten} provided a mathematical model linking random walks , the diffusion equation $\frac{\partial u}{\partial t} =\Delta u$ and the Brownian motion. In fact the scaling limits of a simple and centered random walk with finite variance jumps yields a Brownian motion as follows \cite{ChenMeerNane}:
	recall that a random walk $S_t=Y_1+Y_2+\cdots+Y_{[t]}$ is a sum of independent and identically distributed (iid) $\mathbb{R}^n$-valued random vectors. Here $[t]$ is the largest integer not exceeding $t$ and $S_t$ represents the location of a random particle at time $t.$ Suppose the distribution of $Y$ is spherically symmetric with $\mathbb{E}\big[Y_1\big]=0$ and $\mathbb{E}\big[|Y_1|^2\big]=:\sigma^2<\infty.$   Then by Donsker's invariance principle, the random process $\big\{\lambda^{-1/2}S_{\lambda t},  t\geq 0\big\}$ converges weakly,  as $\lambda\rightarrow\infty$, in the Skorohod space to Brownian motion $\big\{B_t, t\geq 0\big\}$ with  $\mathbb{E}\big[|B_1|^2\big]=\sigma^2.$
	However, if the step random variable $Y_1$ is spherically symmetric  with $\mathbb{P}\big(|Y_1|>x\big)\sim C x^{-\alpha} $ as $x\rightarrow\infty$ for some $0<\alpha<2$ and $C>0$, then $\mathbb{E}\big[|Y_1|^2\big]=\infty$ and the extended Central Limit theorem shows that $\big\{\lambda^{-1/\alpha}S_{\lambda t},  t\geq 0\big\}$
	converge weakly to a rotationally symmetric $\alpha$-stable L\'evy motion $\big\{X_t, t\geq 0\big\}$ with  $\mathbb{E}\big[e^{i\xi\cdot X_t}\big]=e^{-C_0|\xi|^\alpha}$ for every $\xi\in\mathbb{R}^d$ and $t\geq 0$ where the constant $C_0$ depends only on $\alpha, C$ and $n$ \cite{MeerSchef}.
	The probability densities of the Brownian motion variables solve a diffusion equation and thus we refer to the Brownian motion as the stochastic solution to the aforementioned diffusion equation.
	Now if we impose a random waiting time $T_n$ before the $n^{th}$ random walk, then the position of the particle, then $S_n$ represents the the position of the particle at time $T_n=J_1+J_2+\cdots+J_n.$ Since the number of jumps by time $t>0$ is $N_t=\max\{n: \ T_n\leq t\}$, the position of the particle at time $t>0$ is the time-changed process $S_{N_t}$. 
	
	\vspace{0.5em}
	The space-time fractional diffusion $\frac{\partial ^\beta u}{\partial t^\beta}=-(-\Delta)^{\alpha/2} u$ ; where the time-fractional
	derivative is based on the Caputo fractional derivative of order $0<\beta<1$  is used to
	model anomalous sub-diffusions in which a cloud of particles spread slower than square
	root of time \cite{ChenMeerNane}. 
	Initial-value problems for the muti-term time-fractional operator $\sum\limits_{j=1}^nc_j\frac{\partial^{\beta_j}}{\partial t^{\beta_j}}$, for $0<\beta_1<\beta_2<\cdots<\beta_j<1,$ are of great interest due to their vast capability of modeling the anomalous diffusion phenomena in highly heterogeneous aquifer and complex viscoelasticity material, see for example \cite{LiLiuMoto,LiMoto,MijNane} and the references therein.
	
	\vspace{0.5em}

	  Distributed-order diffusions are
	used to model ultra-slow diffusion where a plume of particles spread at a logarithmic
	rate of time \cite{LiKoTo}, which can be useful in particle tracking. 
	In \cite{Nab}, the author  studied the eigenvalue problem for  the distributed-order  time-fractional diffusion equation $\mathbb{D}^{(\nu)}u=c\frac{\partial^2}{\partial x^2}u$  under several boundary conditions. The asymptotic behavior of the solutions for the distributed-order time-fractional initial-boundary-value problems in bounded multi-dimensional domains was studied in \cite{LiKoTo}. The paper \cite{MijNane} studied strong analytical solutions to the distributed-order fractional diffusion equation $\mathbb{D}^{(\nu)}u=Lu$, where $L$ is the generator of a uniformly bounded, strongly continuous semigroup, in a sectorial region of the complex plane. The authors in \cite{MeerNaneVella},  develop explicit strong solutions for the distributed-order time-fractional equation $\mathbb{D}^{(\nu)}u=Lu$, where $L$ is an uniformly elliptical operator on a bounded subset of $\mathbb{R}^n$ with Dirichlet boundary conditions. A survey of recent results on the fractional Cauchy problems and its generalization to a bounded subset of euclidean space was discussed in \cite{Nane}.
	
		Distributed order fractional derivatives are also connected to random walks limits as follows: for each $c>0,$ take a sequence of i.i.d. waiting time $(J_n^c)$ and i.i.d jumps $Y_n^c$. Let $X^c(n)=Y_1^c+Y_2^c+\cdots+Y_n^c$ be the particle location after $n$ jumps, and $T^c(n)=J_1^c+J_2^c+\cdots J_n^c$ the time of $n^{th}$ jump. Suppose $X^c(cu)\implies A(t)$ and $T^c(cu)\implies W_t$ as $c\rightarrow\infty,$ the limit $A(t)$ and $W_t$ are independent L\'evy process. the number of jumps by time $t>0$ is $N_t^c=\max\{n: \ T(n)^c\leq t\}$, and \cite[Theorem 2.1]{MeerSchef2} shows that $X^c(N_t^c)\implies A(E_t)$, where 
	\begin{equation}\label{eq:Et}
	E_t=\inf\{\tau: W_\tau>t\}.
	\end{equation}
In applications to finance for example, the waiting times $J_i^c$ represent the times between transactions and the jumps $Y_i^c$ are the prices jumps (or log-returns)\cite{MeerSchef2}.

\vspace{0.5em}
In this article, we develop explicit classical solutions for the distributed-order space-time fractional diffusion equations $\mathbb{D}^{(\nu)}u=-(-\Delta)^{\alpha/2}$ on bounded domains $D\subset \mathbb{R}^n$, with zero exterior Dirichlet boundary conditions and identify the underlying stochastic process. This provides an extension of works in  \cite{ChenMeerNane, MeerNaneVella,MeerNaneVella2,MijNane}. To the best of our knowledge, this is the first article to investigate such problem.

\vspace{0.5em}
The rest of the article is organized as follows: in Section 2, we review some background results then in Section 3 we state and prove our main results. In section 4, we study the special case when the mixture measure is singular.
	\section{Preliminaries}
	
	Following \cite{Caputo}, the Caputo fractional derivative is defined for $0<\beta<1$ as
	\begin{equation} \label{eq:caputo}
		\frac{\partial^\beta u(t,x)}{\partial t^\beta} = \frac{1}{\Gamma(1-\beta)} \int_0^t \frac{\partial u(r,x)}{\partial r} \frac{dr}{(t-r)^\beta}.
	\end{equation}
	Its Laplace transform is given by
	\begin{equation} \label{eq:caputo laplace}
		\int_0^\infty e^{-st} \frac{\partial^\beta u(t,x)}{\partial t^\beta} ds = s^\beta \tilde{u}(s,x) - s^{\beta-1} u(0,x).
	\end{equation}
	One nice feature of the Caputo fractional derivative is that (as shown above) its Laplace transform incorporates the initial value in the same way as does the first derivative in contrast to its other fractional derivative counterpart, the Riemann-Louiville fractional derivative. Next,
	the distributed order fractional derivative is defined as
	\begin{equation} \label{eq:dofd1}
		\D^{(\nu)} u(t,x): = \int_0^1 \frac{\partial^\beta u(t,x)}{\partial t^\beta} \nu(d\beta)
	\end{equation}
	where $\nu$ is a finite Borel measure with $\nu(0,1) > 0$.

%	For a function continuous $u(t,x)$ with $t \geq 0$, the Riemann–Liouville fractional derivative of order $\beta$ is defined as
%	\begin{equation} \label{eq:rl}
	%	\pp{\frac{\partial}{\partial t}}^\beta u(t,x) = \frac{1}{\Gamma(1-\beta)} \frac{\partial}{\partial t} \int_0^t \frac{u(r,x)}{(t-r)^\beta} dr.
%	\end{equation}
%	Its Laplace transform is given by
%	\begin{equation} \label{eq:rl laplace}
%		\int_0^\infty e^{-st} \pp{\frac{\partial}{\partial t}}^\beta u(t,x) ds = s^\beta \tilde{u}(s,x).
%	\end{equation}
%	If $u(\cdot,x)$ is absolutely continuous on bounded intervals, then the Caputo and Riemann-Liouville derivatives are related by
	%\begin{equation} \label{eq:caputo-rl}
	%	\frac{\partial^\beta u(t,x)}{\partial t^\beta} = \pp{\frac{\partial}{\partial t}}^\beta u(t,x) - \frac{t^{-\beta} u(0,x)}{\Gamma(1-\beta)}.
%	\end{equation}
	%Then the (extended) distributed order derivative is
%	\begin{equation} \label{eq:dofd2}
	%	\D^{(\nu)} u(t,x) = \int_0^1 \bb{\pp{\frac{\partial}{\partial t}}^\beta u(t,x) - \frac{t^{-\beta} u(0,x)}{\Gamma(1-\beta)}} \nu(d\beta).
%	\end{equation}
	
	%\vspace{.5 in}
	A specific mixture model from \cite{MeerSchef} gives rise to distributed order fractional derivatives as follow: let $\big(B_i, 0<B_i<1\big)_i$ be i.i.d random variables satisfying
	
	\begin{equation}\label{PJ}
	\mathbb{P}\big(J_i^c>u|B_i=\beta\big)=\begin{cases}
	1& \ \text{if} \ \ 0\leq u< c^{-1/\beta}\\
	c^{-1}u^{-\beta}& \ \text{if} \  \ u\geq c^{-1/\beta}.
	\end{cases}
	\end{equation}
Then $T^c(cu)\implies W_t$, a subordinator with 
\begin{equation}\label{eq:LapExp}
\mathbb{E}\big[e^{-sW_t}\big]=e^{-t\psi_W(s)},
\end{equation}
where
	\begin{equation} \label{eq:LExp}
		\psi_W(s) = \int\limits_0^\infty \big(e^{-sx}-1\big) \phi_W(dx)
	\end{equation}
	
The associated L\'evy measure is known to be 
\begin{equation}\label{eq:Lmeas}
\phi_W(t,\infty)=\int\limits_0^1t^{-\beta}\mu(d\beta),
\end{equation}
where $\mu$ is the distribution of $B_i$. In light of identity \eqref{eq:Lmeas}, a simple calculation shows that the L\'evy exponent \eqref{eq:LExp} can be expressed as 
	
		\begin{equation} \label{eq:psi_w}
		\psi_W(s) = \int_0^1 s^\beta \Gamma(1-\beta) \mu(d\beta)
	\end{equation}
Recall from \cite[Theorem 3.10]{MeerSchef1} that $c^{-1}N_t^c\implies E_t$, where $\{E_t\}_{t\geq 0}$ is the hitting time process of  the subordinator $\{W_u\}_{u\geq 0}$ defined by \eqref{eq:Et}. 
	
It is well known that the L\'evy process $A(t)$ defines a strongly continuous convolution semigroup $\big\{p_t, t\geq 0\big\}$ with generator $-(-\Delta)^{\alpha/2}$, and $A(E_t)$ is the stochastic solution to the  distributed order fractional diffusion equation 
\begin{equation}\label{eq:Main}
\mathbb{D}^{(\nu)}u=-(-\Delta)^{\alpha/2}u,
\end{equation}
where $\mathbb{D}^{(\nu)}$ is defined in \eqref{eq:dofd1} with the mixing measure $\nu$ given by
\begin{equation}\label{eq:MixM}
\nu(d\beta)=\Gamma(1-\beta)\mu(d\beta).
\end{equation}
 We further impose the condition 
\begin{equation}\label{eq:ExC}
\int\limits_0^1\frac{1}{1-\beta}\mu(d\beta)<\infty
\end{equation}
to ensure that $\nu(0,1)<\infty$. Since $\phi_W(0,\infty)=\infty$ in \eqref{eq:Lmeas}, it follows from \cite[Theorem 3.1]{MeerSchef} that $E_t$ has a Lebesgue density

\begin{equation}\label{eq:LebDens}
g(t,x)=\int\limits_0^t\phi_W(t-y,\infty)P_{W_x}(dy).
\end{equation}

Here, for $0<\alpha\leq 2,$ $-(-\Delta)^{\alpha/2}$ is defined for 
\begin{equation*}
h\in \text{Dom}\big(-(-\Delta)^{\alpha/2}\big):=\Big\{h\in L^2(\mathbb{R}^d;dx): \int\limits_{\mathbb{R}^d}|\xi|^\alpha\big|\hat{h}(\xi)\big|^2d\xi<\infty\Big\}
\end{equation*}
as the function with Fourier transform
\begin{equation*}
\mathcal{F}\Big[-(-\Delta)^{\alpha/2}h\Big]=-|\xi|^\alpha\big|\hat{h}(\xi)\big|^2,
\end{equation*}
where $\mathcal{F}[h]=\hat{h}$ represents the usual Fourier transform of the function $h$, see \cite{ChenMeerNane} and the references therein for more on the fractional Laplacian.

 It is a trivial fact that the following integration by parts formula holds for $\phi, \psi\in \text{Dom}\big(\Delta^{\alpha/2}\big)$:
\begin{equation}\label{IntByParts}
\int\limits_{\mathbb{R}^d}\phi(x)\Delta^{\alpha/2}\psi(x)dx=\int\limits_{\mathbb{R}^d}\psi(x)\Delta^{\alpha/2}\phi(x)dx.
\end{equation}
Let $D$ be a bounded open subset of $\mathbb{R}^d.$ Recall that the process $A(t):=X_t$ is a standard spherically symmetric $\alpha$-stable process on $\mathbb{R}^d$ and denote the first exit time of the process $X$ by 
\begin{equation}\label{FirstExiT}
\tau_D=\inf\{t\geq 0: X_t\notin D\}.
\end{equation}
Let $X^D$ denote the process $X$ "killed" upon leaving the domain $D$, i.e
\begin{equation}
\begin{split}
	X_t^D=\begin{cases}
	X_t,& \ \ t<\tau_D\\
	\partial,& \ \ t\geq \tau_D.
	\end{cases}
\end{split}
\end{equation}
Here, $\partial$ is a cemetery point added to $D$. Throughout this paper, we use the convention that any real valued function $f$ can be extended by taking $f(\partial) = 0.$

Then $X^D$ has a jointly continuous transition
density function $p_D(t, x, y)$ with respect to the Lebesgue measure of $D$ \cite{ChenMeerNane, FukOshTak}. Moreover, by the strong Markov property of $X$, one has for $t > 0$ and $x,y\in D,$

\begin{center}
	$p_D(t,x,y)=p(t,x,x)-\mathbb{E}_x\Big[p(t-\tau_D,X_{\tau_D},y); \tau_D<t\Big],$
\end{center}
where $p(t,\cdot,\cdot) $ is the transition density of the "unkilled process" defined earlier. When $\alpha=2,$  $X_t$ corresponds to a Brownian motion $(B_t)_{t\geq 0}$ with variance $2t$ and in this case, $p(t,x,y) $ is explicitly given by 

\begin{center}
	$p(t,x,y)=(4\pi t)^{-d/2}e^{-\frac{|x-y|^2}{4t}}, \ x,y\in \mathbb{R}^d.$
\end{center}

When $\alpha\in(0,2),$ $X_t$ coincides with an $\alpha$-stable L\'evy process given by $X_t=B_{S_t}$ where $(S_t)_{t\geq 0}$ is an $\alpha/2$-stable subordinator with L\'evy measure 
\begin{center}
	$\nu(dx)=\frac{\alpha/2}{\Gamma(1-\alpha/2)}x^{-1-\alpha/2}\boldsymbol{1}_{\{x>0\}}dx.$
\end{center}
No explicit expression is known for $p(t,x,y) $ but the following approximation holds:
\begin{equation}\label{eq:p}
	C_1\min\Bigg(t^{-d/\alpha}, \frac{t}{|x-y|^{\alpha+d}}\Bigg)\leq p(t,x,y)\leq C_2\min\Bigg(t^{-d/\alpha}, \frac{t}{|x-y|^{\alpha+d}}\Bigg)
\end{equation}
for some positive constants $C_1,C_2.$ See for example \cite{GuerNane} and the reference therein.

Denote by $\Big\{p_t^D, t\geq 0\Big\}$ the transition semigroup of $X^D$, i.e,
\begin{equation}\label{Semgrp}
	p_t^Df(x)=\mathbb{E}_x\big[f(X_t^D)\big]=\int\limits_Dp_D(t,x,y)f(y)dy.
\end{equation}
It is well know ( cf. \cite{FukOshTak}) that $u(t, x) = p^D_tf(x)$ is the unique weak solution to 
\begin{equation*}
\begin{cases}
	\frac{\partial u}{\partial t}&=-(-\Delta)^{\alpha/2}u\\
	u(0,x)&=f(x)
\end{cases}
\end{equation*}
on the Hilbert space $L^2(D;dx).$ Therefore, for each $t > 0, p^D_t$
is a Hilbert-Schmidt operator in $L^2(D; dx)$ so it is compact \cite{ChenMeerNane}. Consequently, for the eigenpair defined
in \eqref{eq:Eig}, we have $p^D_t\phi_n =e^{-\lambda_n t}\phi_n$ in $L^2(D; dx)$ for $n \geq 1$ and $t > 0$. 
Recall that the Dirichlet heat kernel $p_D(t,x,y)$ has the spectral decomposition
\begin{equation}\label{eq:SpecDecomp}
	p_D(t,x,y)=\sum\limits_{n=1}^\infty e^{-\lambda_n t}\phi_n(x)\phi_n(y), \ t>0, x,y\in D,
\end{equation}
where $\{\phi_n\}_{n\geq 0}$ is an orthonormal basis of $L^2(D)$ and  $0<\lambda_1\leq \lambda_2\leq \cdots$ is a sequence of positive numbers satisfying

\begin{equation}\label{eq:Eig}
	\begin{split}
		-(-\Delta)^{\alpha/2}\phi_n(x)&=-\lambda_n \phi_n(x), \ x\in D\\
		\phi_n(x)&=0 \ x\in D^c.
	\end{split}
\end{equation}
Note that the series in \eqref{eq:SpecDecomp} converges absolutely and uniformly.
It is well known that 
\begin{equation}\label{eq:eigv}
	c_1n^{\alpha/2}\leq \lambda_n\leq c_2n^{\alpha/d}
\end{equation}
for positive constants $c_1$ and $c_2,$ see for example (again) \cite{GuerNane} and the references within.

Since the eigenfunctions $\{\phi_n\}_{n\geq 0}$ form a complete orthonormal basis, we can write $$f(x)=\sum\limits_{n=1}^\infty \bar{f}(n)\phi_n(x) \ \text{for any}\  f\in L^2(D).$$ Here we denote $\bar{f}(n)=\int\limits_D\phi_n(x)f(x)dx,$  the $\phi_n$-transform of $f.$

%\begin{align}
%	\int_0^\infty e^{-st} \D^{(\nu)}h(t,\lambda) dt &= \int_0^\infty e^{-st} \int_0^1 \frac{\partial^\beta h(t,\lambda)}{\partial t^\beta} \nu(d\beta)dt \nonumber\\
%	&= \int_0^1 \int_0^\infty e^{-st} \frac{\partial^\beta h(t,\lambda)}{\partial t^\beta} \nu(d\beta)dt \nonumber\\
	%&= \int_0^1 \pp{s^\beta \tilde{h}(s,\lambda) - s^{\beta-1}} \nu(d\beta) \nonumber\\
%	&= \pp{\tilde{h}(s,\lambda) - \frac{1}{s}} \psi_W(s) \label{eq:Dh laplace}
%\end{align}

\begin{definition}
Following \cite{MeerNaneVella}, a function $h$ is said to be a mild solution of a fractional differential equation if its integral (Laplace or Fourier) transform solves the corresponding algebraic equation.
\end{definition}

\begin{lemma}\cite[Lemma $2.1$]{MeerNaneVella}\label{Lem21}
For any $\lambda>0$, $h(t,x)=\int\limits_0^\infty e^{-\lambda x}g(t,x)dx=\mathbb{E}\big[e^{-\lambda E_t}\big]$ is a mild solution of the distributed-order fractional differential equation 
\begin{equation}\label{DistOrdDEq}
\mathbb{D}^{\nu}h(t,\lambda)=-\lambda h(t,\lambda); \ h(0, \lambda)=1.
\end{equation}
\end{lemma}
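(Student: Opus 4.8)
The plan is to verify the mild-solution property directly, in the sense of the Definition above: compute the Laplace transform in $t$ of $h(t,\lambda)$, call it $\widetilde h(s,\lambda)=\int_0^\infty e^{-st}h(t,\lambda)\,dt$, and show that it solves the algebraic equation obtained by Laplace-transforming \eqref{DistOrdDEq}.

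First I would obtain a closed form for $\widetilde h$. Inserting the density \eqref{eq:LebDens} gives $h(t,\lambda)=\int_0^\infty e^{-\lambda x}\big(\int_0^t\phi_W(t-y,\infty)\,P_{W_x}(dy)\big)\,dx$; since every integrand is nonnegative, Tonelli's theorem lets me multiply by $e^{-st}$, integrate in $t$ first, substitute $t=y+r$, and factor the result as a product. Using $\int_0^\infty e^{-sy}\,P_{W_x}(dy)=\mathbb{E}[e^{-sW_x}]=e^{-x\psi_W(s)}$ from \eqref{eq:LapExp}, the elementary identity $\int_0^\infty e^{-sr}\phi_W(r,\infty)\,dr=\psi_W(s)/s$ (a consequence of \eqref{eq:LExp} and Tonelli), and $\int_0^\infty e^{-(\lambda+\psi_W(s))x}\,dx=(\lambda+\psi_W(s))^{-1}$, I arrive at
\[
\widetilde h(s,\lambda)=\frac{\psi_W(s)}{s\,\big(\psi_W(s)+\lambda\big)}.
\]
(One may instead bypass \eqref{eq:LebDens}: from $\mathbb{P}(E_t>u)=\mathbb{P}(W_u<t)$ one gets $h(t,\lambda)=1-\lambda\int_0^\infty e^{-\lambda u}\mathbb{P}(W_u<t)\,du$, and the same manipulations give the same formula.)

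Next I would write down the algebraic counterpart of \eqref{DistOrdDEq}. By the Caputo Laplace transform formula \eqref{eq:caputo laplace}, $\mathcal{L}[\partial_t^\beta h](s)=s^\beta\widetilde h(s,\lambda)-s^{\beta-1}h(0,\lambda)$; integrating this against $\nu(d\beta)$ and interchanging the two integrations yields
\[
\mathcal{L}\big[\mathbb{D}^{(\nu)}h\big](s)=\Big(\widetilde h(s,\lambda)-\tfrac{h(0,\lambda)}{s}\Big)\int_0^1 s^\beta\,\nu(d\beta)=\Big(\widetilde h(s,\lambda)-\tfrac{h(0,\lambda)}{s}\Big)\psi_W(s),
\]
where the last equality uses $\nu(d\beta)=\Gamma(1-\beta)\mu(d\beta)$ from \eqref{eq:MixM} together with \eqref{eq:psi_w}. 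Setting this equal to $-\lambda\widetilde h(s,\lambda)$ and using the prescribed initial value $h(0,\lambda)=1$ (consistent with $E_0=0$), the algebraic equation reads $\psi_W(s)\widetilde h(s,\lambda)-\psi_W(s)/s=-\lambda\widetilde h(s,\lambda)$, whose unique solution is $\widetilde h(s,\lambda)=\psi_W(s)/\big(s(\psi_W(s)+\lambda)\big)$ — precisely the formula obtained in the previous step. Hence $h$ is a mild solution of \eqref{DistOrdDEq}.

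The routine pieces here are the two Tonelli interchanges and the closed forms \eqref{eq:LapExp}, \eqref{eq:psi_w}, already recorded in the preliminaries. The one step deserving care is the interchange $\mathcal{L}\big[\int_0^1\partial_t^\beta h\,\nu(d\beta)\big]=\int_0^1\mathcal{L}[\partial_t^\beta h]\,\nu(d\beta)$: this requires that $\nu$ be a finite measure — guaranteed by condition \eqref{eq:ExC} — together with a bound on $|s^\beta\widetilde h(s,\lambda)-s^{\beta-1}|$ uniform in $\beta\in(0,1)$ for each fixed $s>0$, which holds since $s^\beta\le\max(1,s)$ and $0\le\widetilde h(s,\lambda)\le 1/s$. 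The same observation ($s^\beta\le\max(1,s)$, $\nu$ finite) also shows $\psi_W(s)<\infty$ for $s>0$, so every expression above is well defined.
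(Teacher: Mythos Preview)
The paper does not supply its own proof of this lemma; it is quoted from \cite[Lemma~2.1]{MeerNaneVella} without argument. Your proof is correct and is the standard one: derive the closed form $\widetilde h(s,\lambda)=\psi_W(s)\big/\!\big(s(\psi_W(s)+\lambda)\big)$ from \eqref{eq:LebDens} (or from $\{E_t>u\}=\{W_u<t\}$), and verify that it satisfies the algebraic equation obtained by formally Laplace-transforming \eqref{DistOrdDEq} using \eqref{eq:caputo laplace}, \eqref{eq:MixM}, and \eqref{eq:psi_w}. This is also the route the paper takes when it \emph{does} prove the discrete analogue, Lemma~\ref{lm21MfD}; your write-up is in fact more complete, since you derive $\widetilde h$ explicitly and justify the Tonelli interchanges, whereas that proof simply invokes the formula for $\widetilde h$.
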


\begin{lemma}\cite[Theorem $2.2$]{MeerNaneVella}\label{Thm22}
Let $\nu(d\beta) = p(\beta)d\beta$ for some $p \in C^1(0,1)$, and $0 < \beta_0 < \beta_1 < 1$ be such that
\begin{equation}
	C(\beta_0,\beta_1,p) = \int_{\beta_0}^{\beta_1} \sin(\beta\pi) \Gamma(1-\beta) p(\beta)d\beta > 0.
\end{equation}
Then $h(t,\lambda)$ satisfies $\abs{\partial_t h(t,\lambda)} \leq \lambda k(t)$ where
\begin{equation}\label{eq:k}
	k(t) = \bb{C(\beta_0,\beta_1,p) \pi}^{-1} \bb{\Gamma(1-\beta_1) t^{\beta_1-1} + \Gamma(1-\beta_0) t^{\beta_0-1}}
\end{equation}
and hence is a classical solution. 	
\end{lemma}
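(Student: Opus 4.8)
The plan is to pass to the Laplace transform in $t$, solve for the transform of $\partial_t h$ in closed form, invert it by collapsing the Bromwich contour onto the branch cut of $s\mapsto s^\beta$ along $(-\infty,0]$, and finally estimate the resulting real integral. First, by Lemma~\ref{Lem21} the transform $\tilde h(s,\lambda)=\int_0^\infty e^{-st}h(t,\lambda)\,dt$ satisfies the algebraic equation obtained by Laplace-transforming \eqref{DistOrdDEq} via \eqref{eq:caputo laplace} and \eqref{eq:dofd1}. Recalling \eqref{eq:psi_w} and writing $\psi_W(s)=\int_0^1 s^\beta\Gamma(1-\beta)p(\beta)\,d\beta$, that equation reads $\psi_W(s)\tilde h(s,\lambda)-s^{-1}\psi_W(s)=-\lambda\tilde h(s,\lambda)$, whence
\begin{equation*}
\tilde h(s,\lambda)=\frac{\psi_W(s)}{s\bigl(\psi_W(s)+\lambda\bigr)},\qquad\widetilde{\partial_t h}(s,\lambda)=s\,\tilde h(s,\lambda)-h(0,\lambda)=\frac{-\lambda}{\psi_W(s)+\lambda}.
\end{equation*}
In particular $h(t,\lambda)=1+\int_0^t\partial_r h(r,\lambda)\,dr$, so it is enough to invert $-\lambda/(\psi_W(s)+\lambda)$ and bound the outcome by $\lambda k(t)$.

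Next I would perform the inversion by contour deformation. The map $s\mapsto\psi_W(s)$ is holomorphic on $\C\setminus(-\infty,0]$, and there $\psi_W(s)+\lambda$ never vanishes: for $\Im s>0$ every power $s^\beta$ has argument in $(0,\pi)$, so $\Im\psi_W(s)>0$ (symmetrically for $\Im s<0$), while $\psi_W(s)>0$ for $s>0$; hence $\psi_W(s)$ is never the negative number $-\lambda$. Moreover $\abs{\psi_W(s)}\to\infty$ as $\abs s\to\infty$ in the cut plane and $\psi_W(s)+\lambda\to\lambda$ as $s\to0$. Thus the Bromwich line may be deformed to a Hankel contour around $(-\infty,0]$: the large arcs vanish by the decay of $1/\psi_W$ together with Jordan's lemma, the small circle about the origin contributes nothing since the integrand is bounded there, and only the jump of $s^\beta$ across the cut remains. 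Writing $s=re^{\pm i\pi}$ and $\psi_W(re^{i\pi})=A(r)+iB(r)$ with $B(r)=\int_0^1 r^\beta\sin(\beta\pi)\Gamma(1-\beta)p(\beta)\,d\beta>0$, this gives
\begin{equation*}
\partial_t h(t,\lambda)=\frac{\lambda}{\pi}\int_0^\infty e^{-rt}\,\Im\!\bb{\frac{1}{\psi_W(re^{i\pi})+\lambda}}\,dr=-\frac{\lambda}{\pi}\int_0^\infty e^{-rt}\,\frac{B(r)}{(A(r)+\lambda)^2+B(r)^2}\,dr.
\end{equation*}

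For the estimate I would drop the nonnegative term $(A(r)+\lambda)^2$ from the denominator, so that $\abs{\partial_t h(t,\lambda)}\le\frac{\lambda}{\pi}\int_0^\infty e^{-rt}B(r)^{-1}\,dr$, and then bound $B(r)$ from below by restricting its $\beta$-integral to $[\beta_0,\beta_1]$: since $r^\beta\ge r^{\beta_0}$ for $r\ge1$ and $r^\beta\ge r^{\beta_1}$ for $0<r\le1$ whenever $\beta\in[\beta_0,\beta_1]$, one obtains $B(r)\ge C(\beta_0,\beta_1,p)\,r^{\beta_0}$ on $[1,\infty)$ and $B(r)\ge C(\beta_0,\beta_1,p)\,r^{\beta_1}$ on $(0,1]$. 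Splitting $\int_0^\infty e^{-rt}B(r)^{-1}\,dr$ at $r=1$, enlarging each half back to $(0,\infty)$, and using $\int_0^\infty e^{-rt}r^{-\beta}\,dr=\Gamma(1-\beta)t^{\beta-1}$, one lands exactly on $\abs{\partial_t h(t,\lambda)}\le\lambda k(t)$ with $k$ as in \eqref{eq:k}. This suffices for classicality: $\partial_t h(\cdot,\lambda)$ is continuous on $(0,\infty)$ and $k\in L^1_{\mathrm{loc}}[0,\infty)$ (because $\beta_0,\beta_1<1$), so each Caputo derivative $\partial_t^\beta h$ exists in the classical sense, and a Fubini interchange in \eqref{eq:dofd1} upgrades the mild identity to the pointwise identity $\D^{(\nu)}h(t,\lambda)=-\lambda h(t,\lambda)$.

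The crux is the second step, the rigorous contour deformation: one has to confirm that $\psi_W+\lambda$ is zero-free off the cut, give a quantitative lower bound for $\abs{\psi_W(s)}$ on large circles in order to discard the connecting arcs, justify that the contribution near the origin vanishes, and check integrability of the branch-cut integrand at $0$ and at $\infty$ --- the last point being precisely what the third-step estimate provides. Everything after the branch-cut representation of $\partial_t h$ is routine.
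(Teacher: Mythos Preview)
The paper does not supply its own proof of this lemma; it simply quotes the result from \cite[Theorem~2.2]{MeerNaneVella}. Your argument is correct and is precisely the approach taken there (and by Kochubei \cite{Koch}): compute $\widetilde{\partial_t h}(s,\lambda)=-\lambda/(\psi_W(s)+\lambda)$, deform the Bromwich line to a Hankel contour around the cut $(-\infty,0]$ (using that $\psi_W+\lambda$ is zero-free off the cut), read off the branch-cut representation $\partial_t h(t,\lambda)=-\frac{\lambda}{\pi}\int_0^\infty e^{-rt}\,B(r)\big/\!\big((A(r)+\lambda)^2+B(r)^2\big)\,dr$, drop the first square in the denominator, and bound $B(r)^{-1}$ from above by restricting the $\beta$-integral to $[\beta_0,\beta_1]$ and using $r^\beta\ge r^{\beta_0}$ for $r\ge1$, $r^\beta\ge r^{\beta_1}$ for $r\le1$. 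The paper itself reproduces exactly this computation in the atomic case in the proof of Lemma~\ref{lmConvh}, quoting the integral representation from \cite{Koch} rather than re-deriving the contour deformation, so your treatment is in fact slightly more self-contained than what appears here.
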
 
%	\begin{proposition}\cite[Theorem 1.2]{QyGuang}\label{GuanThm12}
	%Let $G$ be a bounded Lipschitz open set in $\mathbb{R}^n.$ If $0<\alpha<2,$ $u, v\in C^2_c(\bar{G})$, then 
	%\begin{equation}
	%\int_G u(x)\Delta_G^{\alpha/2}v(x)dx=	\int_G v(x)\Delta_G^{\alpha/2}u(x)dx.
	%\end{equation}
%	\end{proposition}

Define the space 
\begin{equation}\label{BanacSp}
C^k(\bar{D})=\big\{u\in C^k(D): D^\gamma u\ \text{is bounded and uniformly continuous for all} \ |\gamma|\leq k \big\}.
\end{equation}
It is well-known that $\big(C^k(\bar{D}), {\|\cdot\|}_{C^k(\bar{D})}\big)$ is a Banach space, where 
\begin{equation}\label{CkNorm}
{\|u\|}_{C^k(\bar{D})}=\sum\limits_{|\gamma|\leq k}\sup\limits_D |D^\gamma u|.
\end{equation}
We will also use the notation 
$${\norm{\cdot}_{2,D}}:={\norm{\cdot}}_{L^2(D)}.$$
	\section{Distributed-order space time fractional diffusion equations}
	
	In this section, we provide classical solutions to the distributed-order space-time fractional diffusion equation $\mathbb{D}^{(\nu)}u=-(-\Delta)^{\alpha/2}u$ in the bounded domains $D\subset \mathbb{R}^d.$
%To begin, let $\alpha \in (0,2)$ and $\beta = \dfrac{\alpha}{2}$.  

Let $D_\infty=(0,\infty)\times D$ and define
\begin{center}
	
	$\mathcal{H}_{\Delta^{\alpha/2}}(D_\infty)=\big\{u:D_\infty\rightarrow\mathbb{R}: \ \Delta^{\alpha/2}u\in C(D_\infty), \ |\partial_tu(t,x)|\leq k(t)g(x), \ g\in L^{\infty}(D), t>0\},$ where $k$ is given by \eqref{eq:k}.
\end{center}
%\vspace{.5 in}

%\pagebreak

%\setcounter{chapter}{2}
%\numberwithin{equation}{chapter}
We are now ready to state our first main result. 
\begin{theorem}\label{tm31}
Let $D $ be a regular   open  set in $\mathbb{R}^d$ and $P_t^D$ be the semigroup of a killed  $\alpha$-stable process $\{X(t)\}$ on $D$.  Let $E_t = \inf\{\tau: W_\tau>t\}$ be the inverse of the subordiator $W_t$, independent of $\{X(t)\}$, with L\'evy measure given by \eqref{eq:Lmeas}.  Suppose that $\mu(d\beta) = p(\beta)d\beta$, as in Lemma \ref{Thm22},	and $\mathbb{D}^{(\nu)}$ is the distributed-order fractional derivative defined by \eqref{eq:dofd1}.  Then, for any $f \in \text{Dom}\big((\Delta)^{\alpha/2}_D\big) \cap
C^1(\bar{D}) \cap C^2(D)$ for which the eigenvalue expansion of $-(-\Delta)^{\alpha/2} f$ with respect to the complete orthonormal basis $\{\phi_n:n \in \N\}$ converges uniformly and absolutely, the unique classical solution of the distributed order space-time fractional diffusion equation
\begin{equation}\label{eq:theorem}
\begin{split}
	\mathbb{D}^{(\nu)} u(t,x) =& -(-\Delta)^{\alpha/2} u(t,x), \ t>0 , \ x\in D, \\
u(t,x) =& 0,  \ x \in D^C,\ t>0,\\
u(0,x) =& f(x), \ x \in D,
\end{split}
\end{equation}

for $u \in \H_{\Delta^{\alpha/2}}(D_\infty) \cap C_b(\bar{D}_\infty) \cap C^1(\bar{D})$, is given by
\begin{align}\label{eq:expectation}
	u(t,x) &= \mathbb{E}_x[f(X(E_t))I(\tau_D(X)>E_t)] \nonumber\\
	&= \sum_{n=1}^\infty \bar{f}(n)\phi_n(x)h(t,\lambda_n), 
\end{align}
where $\displaystyle h(t,\lambda) = \mathbb{E}(e^{-\lambda E_t}) $ is the Laplace transform of $E_t$.
\end{theorem}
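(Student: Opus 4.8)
The plan is to prove the two displayed identities in \eqref{eq:expectation} separately and then establish uniqueness in the stated class. Throughout I abbreviate $M := \sum_{n\ge 1}\lambda_n|\bar f(n)|\sup_D|\phi_n|$, which is finite: since $-(-\Delta)^{\alpha/2}f = \sum_n \overline{(-(-\Delta)^{\alpha/2}f)}(n)\,\phi_n$ and, by the integration-by-parts formula \eqref{IntByParts} applied to the zero-extensions of $f$ and $\phi_n$ together with \eqref{eq:Eig}, $\overline{(-(-\Delta)^{\alpha/2}f)}(n) = -\lambda_n\bar f(n)$, the hypothesis that this eigenexpansion converges absolutely and uniformly is exactly the statement $M<\infty$. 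Because $\lambda_n\to\infty$ by \eqref{eq:eigv}, only finitely many $\lambda_n$ are $<1$, so also $\sum_n|\bar f(n)|\sup_D|\phi_n|<\infty$.

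\emph{Step 1: the probabilistic representation equals the series.} Conditioning on $E_t$, which is independent of $\{X(t)\}$ and has the Lebesgue density \eqref{eq:LebDens}, and using the tower property gives
\[
\mathbb{E}_x[f(X(E_t))I(\tau_D(X)>E_t)] = \int_0^\infty \mathbb{E}_x\big[f(X(s))I(\tau_D>s)\big]\,P_{E_t}(ds) = \int_0^\infty p_s^D f(x)\,P_{E_t}(ds).
\]
Expanding $f=\sum_n\bar f(n)\phi_n$ in $L^2(D)$ and using $p_s^D\phi_n=e^{-\lambda_n s}\phi_n$ yields $p_s^D f(x)=\sum_n\bar f(n)e^{-\lambda_n s}\phi_n(x)$, a series bounded in absolute value by $\sum_n|\bar f(n)|\sup_D|\phi_n|<\infty$ uniformly in $s$; Fubini's theorem then lets me interchange the sum with $\int_0^\infty(\cdot)\,P_{E_t}(ds)$, producing $\sum_n\bar f(n)\phi_n(x)\int_0^\infty e^{-\lambda_n s}P_{E_t}(ds)=\sum_n\bar f(n)\phi_n(x)h(t,\lambda_n)$.

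\emph{Step 2: the series is a classical solution.} Set $u(t,x)=\sum_n\bar f(n)\phi_n(x)h(t,\lambda_n)$. Since $0\le h(t,\lambda_n)\le 1$, the series converges uniformly on $\bar D_\infty$, so $u\in C_b(\bar D_\infty)$; at $t=0$, $h(0,\lambda_n)=1$ by Lemma \ref{Lem21} so $u(0,\cdot)=f$, and $\phi_n\equiv 0$ on $D^c$ gives the exterior condition. By Lemma \ref{Thm22}, $|\partial_t h(t,\lambda_n)|\le\lambda_n k(t)$, so $\sum_n\bar f(n)\phi_n(x)\partial_t h(t,\lambda_n)$ is dominated by $k(t)M$ and converges uniformly on $[\delta,T]\times\bar D$; hence $\partial_t u$ exists with $|\partial_t u(t,x)|\le k(t)M$, giving $u\in\H_{\Delta^{\alpha/2}}(D_\infty)$ (the membership $\Delta^{\alpha/2}u\in C(D_\infty)$ following since $\Delta^{\alpha/2}$ is closed and the partial sums and their images $-\sum_{n\le N}\lambda_n\bar f(n)h(t,\lambda_n)\phi_n$ converge uniformly, hence in $L^2(D)$; $u\in C^1(\bar D)$ uses $f\in C^1(\bar D)\cap C^2(D)$ and regularity of $D$). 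Because $k(r)(t-r)^{-\beta}$ is integrable on $(0,t)$ and $\int_0^1\frac{t^{1-\beta}}{1-\beta}\mu(d\beta)<\infty$ by \eqref{eq:ExC}, Fubini's theorem justifies moving $\mathbb{D}^{(\nu)}=\int_0^1\partial_t^\beta(\cdot)\,\nu(d\beta)$ through the sum; Lemma \ref{Lem21} then gives $\mathbb{D}^{(\nu)}h(t,\lambda_n)=-\lambda_n h(t,\lambda_n)$, so $\mathbb{D}^{(\nu)}u(t,x)=-\sum_n\lambda_n\bar f(n)h(t,\lambda_n)\phi_n(x)$. On the other side, applying $-(-\Delta)^{\alpha/2}$ termwise (again legitimate by closedness and uniform convergence of $\sum_n\lambda_n\bar f(n)h(t,\lambda_n)\phi_n$) and using $-(-\Delta)^{\alpha/2}\phi_n=-\lambda_n\phi_n$ gives the same expression, so the PDE holds.

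\emph{Step 3: uniqueness.} Let $u$ be any solution in $\H_{\Delta^{\alpha/2}}(D_\infty)\cap C_b(\bar D_\infty)\cap C^1(\bar D)$. For each $t$, $u(t,\cdot)\in L^2(D)$, so form $\bar u(t,n)=\int_D u(t,x)\phi_n(x)\,dx$. The bound $|\partial_t u(t,x)|\le k(t)g(x)$ with $g\in L^\infty(D)$ and $D$ bounded permits differentiating under the integral and interchanging $\mathbb{D}^{(\nu)}$ with $\int_D(\cdot)\,dx$; the integration-by-parts identity \eqref{IntByParts}, applied to the zero-extensions of $u(t,\cdot)$ and $\phi_n$ (valid since $u\in\H_{\Delta^{\alpha/2}}(D_\infty)\cap C^1(\bar D)\cap C^2(D)$ and $\phi_n$ vanishes on $D^c$), then gives $\mathbb{D}^{(\nu)}\bar u(t,n)=\int_D(-(-\Delta)^{\alpha/2}u)\phi_n\,dx=\int_D u(-(-\Delta)^{\alpha/2}\phi_n)\,dx=-\lambda_n\bar u(t,n)$ with $\bar u(0,n)=\bar f(n)$. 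Thus $t\mapsto\bar u(t,n)$ is a classical, hence mild, solution of \eqref{DistOrdDEq} with $\lambda=\lambda_n$; by Lemma \ref{Lem21} and uniqueness of Laplace transforms the mild solution is unique, so $\bar u(t,n)=\bar f(n)h(t,\lambda_n)$, whence $u(t,x)=\sum_n\bar u(t,n)\phi_n(x)=\sum_n\bar f(n)h(t,\lambda_n)\phi_n(x)$, the function of Steps 1–2.

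\emph{Main obstacle.} The delicate part is not the algebra but the analytic bookkeeping: the termwise action of the two nonlocal operators, $\mathbb{D}^{(\nu)}$ (a $\nu$-mixture of Caputo derivatives, each an integral operator) and $-(-\Delta)^{\alpha/2}$, which forces the repeated Fubini/dominated-convergence arguments above and relies on the Lemma \ref{Thm22} bound $|\partial_t h(t,\lambda_n)|\le\lambda_n k(t)$ together with the hypothesis $M<\infty$; and, in the uniqueness step, the justification of \eqref{IntByParts} for the \emph{killed} fractional Laplacian, i.e. verifying that the zero-extension of a solution in the stated class lies in $\mathrm{Dom}(\Delta^{\alpha/2})$ so that no boundary term survives.
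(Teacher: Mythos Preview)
Your proposal is correct and uses essentially the same ingredients as the paper's proof: the $\phi_n$-transform together with \eqref{IntByParts} to reduce to the scalar equation \eqref{DistOrdDEq}, Lemmas~\ref{Lem21}--\ref{Thm22} to identify and control $h(t,\lambda_n)$, and termwise verification (dominated by $M$ and $k(t)M$) that the series solves \eqref{eq:theorem}. The only difference is organizational: the paper first \emph{derives} the series from an assumed solution by carrying out the Laplace-transform computation explicitly (arriving at \eqref{eq:u_hat}--\eqref{eq:u_bar}) and only then matches it with the expectation in Step~7, whereas you start from the conditioning identity $\mathbb{E}_x[f(X(E_t))I(\tau_D>E_t)]=\int_0^\infty p_s^Df(x)\,P_{E_t}(ds)$, expand directly, and push the $\phi_n$-transform/Laplace-uniqueness argument into the uniqueness step, citing Lemma~\ref{Lem21} rather than recomputing $\hat u(s,n)$. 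The analytical content, including the delicate Fubini justifications you flag under ``Main obstacle,'' is the same in both.
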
  
%
%STATE AND PROVE THEOREM
%

\begin{proof}
The proof follows closely ideas from \cite{MeerNaneVella}.  Assume that $u(t,x)$ solves \eqref{eq:theorem}. Using \eqref{IntByParts}
%Proposition \ref{GuanThm12}
, we have
%
%Update to reference number once bibliography is made
%
\begin{align*}
\int_D \phi_n(x)\Delta^{\alpha/2}u(t,x)dx =& \int_D u(t,x)\Delta^{\alpha/2}\phi_n(x)dx\\
=& -\lambda_n \int_D u(t,x)\phi_n(x)dx\\
=& -\lambda_n \bar{u}(t,x).
\end{align*}
Next, using \eqref{eq:caputo}, and \eqref{eq:dofd1}, observe that
\begin{align}
	\int_D \phi_n(x)\D^{(\nu)}u(t,x)dx &= \int_D \phi_n(x) \int_0^1 \frac{\partial^\beta}{\partial t^\beta}u(t,x)\Gamma(1-\beta)p(\beta)d\beta dx \nonumber\\%\eqref{eq:dofd1}
	&= \int_D \phi_n(x) \int_0^1 \frac{1}{\Gamma(1-\beta)} \int_0^t \frac{\partial u(s,x)}{\partial s} \frac{ds}{(t-s)^\beta}\Gamma(1-\beta)p(\beta)d\beta dx \nonumber\\%\eqref{eq:caputo}
	&= \int_D \phi_n(x) \int_0^1 \int_0^t \frac{\partial u(s,x)}{\partial s}\frac{ds}{(t-s)^\beta}p(\beta)d\beta dx \nonumber\\
	&= \int_0^1 \int_0^t \Bigg(\int_D \phi_n(x)\frac{\partial}{\partial s}u(s,x)dx\Bigg) \frac{ds}{(t-s)^\beta}p(\beta)d\beta \nonumber\\%Fubini
	&= \int_0^1 \int_0^t \frac{\partial}{\partial s} \Bigg(\int_D \phi_n(x)u(s,x)dx\Bigg) \frac{ds}{(t-s)^\beta}p(\beta)d\beta \nonumber\\
	&= \int_0^1 \frac{1}{\Gamma(1-\beta)} \int_0^t \frac{\partial}{\partial s}\bar{u}(s,n)\frac{ds}{(t-s)^\beta}\Gamma(1-\beta)p(\beta)d\beta \nonumber\\
	%&= \int_0^1 \frac{\partial^\beta\bar{u}(t,n)}{\partial t^\beta} \Gamma(1-\beta)p(\beta)d\beta \nonumber\\
	&= \D^{(\nu)}\bar{u}(t,n).%\eqref{eq:caputo}\eqref{eq:dofd1}
\end{align}
The Fubini-Tonelli argument used in the last equality above can be justified as in \cite[Page 223]{MeerNaneVella}.
Now applying the $\phi_n$-transform to both sides to \eqref{eq:theorem}, we have that
\begin{equation} \label{eq:Du_bar}
	\D^{(\nu)}\bar{u}(t,n) = -\lambda_n\bar{u}(t,x).
\end{equation}
%Recall that $D \subset \R$ is Lipschitz, 
Since $u$ is uniformly continuous on $C\big([0,\epsilon]\times \bar{D}\big)$ it is uniformly bounded on $[0,\epsilon] \times D$ for any $\epsilon > 0$.  Thus, by the dominated convergence theorem,  we have 
%
%Explain more on this?
%
it follows that
\begin{align}
	\lim\limits_{t \to 0} \int\limits_D u(t,x)\phi_n(x)dx = 
	 \bar{f}(n),
\end{align}
and further $\bar{u}(0,n) = \bar{f}(n)$.  A similar argument shows that the function $t\mapsto\bar{u}(t,n)$ is a continuous of $t\in[0,\infty)$ for every $n.$
Next denote the Laplace transform and the $\phi_n$-Laplace transform of $u$ respectively by 
\begin{center}
	$\tilde{u}(s,x)=\int_0^\infty e^{-st}u(t,x)dt$ and $\hat{u}(s,n)=\int\limits_D\phi_n(x)\tilde{u}(s,x)dx$.
\end{center}
Apply now the Laplace transform to both sides of \eqref{eq:Du_bar} and using \eqref{eq:caputo laplace} we get
\begin{equation}
	\int_0^1 \Big[s^\beta \hat{u}(s,n) - s^{\beta-1} \bar{u}(0,n)\Big] \Gamma(1-\beta) p(\beta)d\beta = -\lambda_n \hat{u}(s,n),
\end{equation}
and further
\begin{equation}
	\hat{u}(s,n) = \frac{\bar{f}(n) \int_0^1 s^{\beta-1} \Gamma(1-\beta) p(\beta)d\beta}{\int_0^1 s^\beta \Gamma(1-\beta) p(\beta)d\beta + \lambda_n}.
\end{equation}
Then, by \eqref{eq:psi_w}, 
\begin{align}\label{eq:u_hat}
	\hat{u}(s,n) &= \frac{\overline{f}(n)\psi_W(s)}{s(\psi_W(s)+\lambda_n} \nonumber\\
	&= \frac{1}{s}\overline{f}(n)\psi_W(s) \int_0^\infty e^{-(\psi_W(s)+\lambda_n)l)}dl \nonumber\\
	&= \int_0^\infty e^{-\lambda_n l}\overline{f}(n)\frac{1}{s}e^{-l\psi_W(s)}dl, 
\end{align}
by properties of the exponential density.  Using that $\{\phi_n:\ n \in \N\}$ is a complete orthonormal basis of $L^2(D)$, we calculate the $\phi_n$-transform of the killed semigroup $P_l^Df(x) = \sum\limits_{m=1}^\infty e^{-\lambda_m l}\phi_m(x)\overline{f}(m)$ as
\begin{align}\label{eq:Tf_phitransform}
	\overline{[P_l^Df]}(m) &= \int_D \phi_n(x)P_l^Df(x)dx \nonumber\\
	&= \int_D \phi_n(x) \int_D p_D(t,x,y)f(y)dydx \nonumber\\
	&= \int_D \phi_n(x) \int_D \sum_{m=1}^\infty e^{-\lambda_m l}\phi_m(x)\phi_m(y)f(y)dydx \nonumber\\
	&= \int_D \phi_n(x) \sum_{m=1}^\infty e^{-\lambda_m l}\phi_m(x) \int_D \phi_m(y)f(y)dydx \nonumber\\
	&= \int_D \phi_n(x) \sum_{m=1}^\infty e^{-\lambda_m l}\phi_m(x)\overline{f}(m)dx \nonumber\\
	&= \sum_{m=1}^\infty e^{-\lambda_m l}\overline{f}(m) \int_D \phi_n(x)\phi_m(x)dx \nonumber\\
	&= e^{-\lambda_m l}\overline{f}(m).
\end{align}
Since $P_t^D$ is a contraction semigroup on $L^2(D)$, $P_l^Df \in L^2(D)$, so Fubini-Tonelli applies.  By \cite[(3.20)]{MeerSchef1}, it follows that

\begin{equation}\label{eq:psi*e}
	\frac{1}{s}\psi_W(s)e^{-\psi_W(s)l} = \int_0^\infty e^{-st}g(t,l)dt,
\end{equation}
where $g(t,l)$ is the smooth density of $E_t$.  From \eqref{eq:u_hat}, \eqref{eq:Tf_phitransform}, and \eqref{eq:psi*e}, we have that
\begin{align*}
	\int_0^\infty e^{-st}\overline{u}(t,n)dt &= \hat{u}(s,n)\\
	&= \int_0^\infty \overline{[p_l^Df]}(n) \bb{\int_0^\infty e^{-st}g(t,l)dt}dl\\
	&= \int_0^\infty e^{-st} \bb{\int_0^\infty \overline{[P_l^Df]}(n)g(t,l)dl}dt.
\end{align*}
Then, by the uniqueness of the Laplace transform and \eqref{eq:Tf_phitransform},
\begin{align}\label{eq:u_bar}
	\overline{u}(t,n) &= \int_0^\infty \overline{[P_l^Df]}(n)g(t,l)dl \nonumber\\
	&= \overline{f}(n) \int_0^\infty e^{-\lambda_n l}dl \nonumber\\
	&= \overline{f}(n)h(t,\lambda_n),
\end{align}
recalling that $h(t,\lambda)$ is the Laplace transform of $E_t$.  Therefore, by inverting 
%
%How?
%
the $\phi_n$-transform $\overline{u}(t,n)$ in \eqref{eq:u_bar}, an $L^2$-convergent solution to \eqref{eq:theorem} is
\begin{align}\label{eq:solution}
	u(t,x) &= \sum_{n=1}^\infty \overline{u}(t,n)\phi_n(x) \nonumber\\
	&= \sum_{n=1}^\infty \overline{f}(n)\phi_n(x)h(t,\lambda_n)
\end{align}
for $t \geq 0$.  As such, the remainder of the proof will be to show that \eqref{eq:solution} converges pointwise and satisfies the conditions in \eqref{eq:theorem}. This can be done following the 8-step proof provided in \cite{MeerNaneVella} with some modifications.

\vspace{0.25cm}
%
%Proving theorem conditions are met
%NEED TO REWRITE IN OWN WORDS AND BETTER EXPLAIN!!!
%\begin{enumerate}

\begin{enumerate}
\item[\textbf{Step 1.}] The series in \eqref{eq:solution} converges uniformly in $t\in[0,\infty) $ in the $L^2$- sense.

\vspace{0.25cm}
 The proof in this step is  very similar to \textbf{Step 1} in Theorem \ref{eq:Thm2} (ahead), so we omit the details here to avoid repetitiveness.
% Since $h(t,\lambda)$ is the Laplace transform of $E_t$, it is completely monotone and non-increasing in $\lambda \geq 0$ with $0< h(t,\lambda_n) \leq 1$. Then an elementary estimate shows that \eqref{eq:solution} convergence uniformly in $t \in [0,\infty)$ in the $L^2$ sense.

\vspace{0.25cm}

\item[\textbf{Step 2.}] The function $t \to u(t,\cdot) \in C\big({(0,\infty);L^2(D)}\big)$ and $u(t, \cdot)$ takes the initial datum $f$ in the sense of $L^2(D).$

\vspace{0.25cm}
Using $\{E_t \leq x\} = \{W_x \geq t\}$, %[26, Eq. (3.16)]%
it is easy to check that $E_t \to E_0 \equiv 0$ in distribution as $t \to 0^+$, and hence the Laplace transforms converge: $h(t,\lambda_n) \to 1$ as $t \to 0$.

Now, since $f\in L^2(D)$, given $\epsilon>0,$ we can choose $n_0(\epsilon)$ such that
\begin{equation}\label{eq:L2Normf}
\sum\limits_{n=n_0(\epsilon)}^\infty \big(\bar{f}(n)\big)^2<\epsilon.
\end{equation}
Thus, fixing $\epsilon\in(0,1)$ and choosing $n_0=n_0(\epsilon)$ as in \eqref{eq:L2Normf}, we have
\begin{align*}
	{\big\|u(t,x)-f\big\|}_{2,D}^2=&{\sum\limits_{n=1}^\infty\big(\bar{f}(n)\big)^2\big(h(t,\lambda_n)-1\big)^2}\\
	\leq & \sum\limits_{n=1}^{n_0}\big(\bar{f}(n)\big)^2\big(h(t,\lambda_n)-1\big)^2\\
	&\qquad+ \sum\limits_{n=n_0+1}^\infty\big(\bar{f}(n)\big)^2\big(h(t,\lambda_n)-1\big)^2\\
	\leq & \big(h(t,\lambda_{n_0})-1\big)^2\Big({\big\|f\big\|}_{2,D}^2+\epsilon\Big).
\end{align*}
 It follows that  $\norm{u(t,\cdot)-f}_{2,D} \to 0$, as $t \to 0$ since $h(t,\lambda_n) \to 1$ as $t \to 0$.
 Finally,  the continuity of $t \to u(t,\cdot)$ in $L^2(D)$ at every point $t \in (0,\infty)$ follows by a similar argument.

\vspace{0.25cm}

\item[\textbf{Step 3.}] The following decay estimate holds:  $\norm{u(t,\cdot)}_{2,D} \leq h(t,\lambda_1)\norm{f}_{2,D}$

The proof is a mere application of   Parseval’s identity and the fact that  $\lambda_n$ is increasing in $n$, and $h(t,\lambda_n)$ is non-increasing for $n \geq 1$,.

\vspace{0.25cm}

\item[\textbf{Step 4.}]  The series in \eqref{eq:solution} converges uniformly and absolutely.
% in $t\in[0,\infty) $ in the $L^\infty(D)$.

\vspace{0.25cm}
Again the proof in this step is  very similar to \textbf{Step 1} in Theorem \ref{eq:Thm2} (ahead), so the details are omitted here.

% Note $h(t,\lambda) = \mathbb{E}\big({e^{-\lambda E_t}}\big) \leq 1$ for all $t,\lambda \geq 0$.  Then
%\[ u_N(t,x) = \sum_{n=1}^N \overline{f}(n)\phi_n(x)h(t,\lambda_n) \]
%SHOW THE ARGUMENT!!!!!!!!
%is a Cauchy sequence in $L^\infty(D)$ uniformly in $t \geq 0$.  Then \eqref{eq:solution} converges uniformly and absolutely.
\vspace{0.25cm}

\item[\textbf{Step 5.}]  $\D^{(\nu)}$ and $\Delta^{{\alpha}/{2}}$ can be applied term by term in \eqref{eq:solution} to show that \eqref{eq:solution} is a strong solution to \eqref{eq:theorem} and $u\in \H_{\Delta^{\alpha/2}}(D_\infty) \cap C_b(\overline{D}_\infty)$.

\vspace{0.25cm}
Since the eigenfunction expansion of $\Delta^{{\alpha}/{2}} f$ converges absolutely and uniformly, the series $\sum\limits_{n=1}^\infty \overline{f}(n)h(t,\lambda_n)\Delta^{{\alpha}/{2}}\phi_n(x)$ is absolutely convergent in $L^\infty(D)$ uniformly in $t\in[0,\infty)$. Moreover, since $\D^{(\nu)}h(t,\lambda) = -\lambda h(t,\lambda)$, it follows that
\[ \sum_{n=1}^\infty \overline{f}(n)\phi_n(x)\D^{(\nu)}h(t,\lambda_n) = \sum_{n=1}^\infty \bar{f}(n)h(t,\lambda_n)\Delta^{{\alpha}/{2}}\phi_n(x), \] 
where both series converge absolutely and uniformly. This shows that  $\D^{(\nu)}$ and $\Delta^{{\alpha}/{2}}$ can be applied term by term in \eqref{eq:solution} to show that \eqref{eq:solution} is a strong solution to \eqref{eq:theorem}. Since $\Delta^{{\alpha}/{2}} f$ has an absolutely and uniformly convergent series expansion with
respect to $(\phi_n)$, it follows from Lemma \ref{Thm22}
that $u \in \H_{\Delta^{\alpha/2}}(D_\infty) \cap C_b(\overline{D}_\infty)$.

\vspace{0.25cm}

\item[\textbf{Step 6.}] $u\in C^1(\overline{D}).$

This follows easily from  \eqref{CkNorm} and the absolute and uniform convergence of the series defining $f$ together with the fact that $f\in C^1(\bar{D})$.

\vspace{0.25cm}

\item[\textbf{Step 7.}] $u(t,x) = \mathbb{E}_x[f(X(E_t))I(\tau_D(X)>E_t)].$

\vspace{0.25cm}

 By \eqref{eq:Tf_phitransform} and \eqref{eq:solution}, a direct calculation shows that
\begin{align}
	u(t,x) &= \sum_{n=1}^\infty \phi_n(x) \int_0^\infty \overline{[T_D(l)f]}(n)g(t,l)dl \nonumber\\
	&= \int_0^\infty \bb{\sum_{n=1}^\infty \phi_n(x)\overline{f}(n)e^{-l\lambda_n}}g(t,l)dl \nonumber\\
	&= \int_0^\infty T_D(l)f(x)g(t,l)dl \nonumber\\
	&= \mathbb{E}_x\bb{f\big({X(E_t)}\big)I\big({\tau_D(X)>E_t}\big)},
\end{align}
which proves \eqref{eq:expectation}.
\vspace{0.25 cm}

\item[\textbf{Step 8.}] The function $u$ defined by the series \eqref{eq:solution} is the unique solution to \eqref{eq:theorem}.

 To prove this, it is enough to show that if   $u_i$, $i = 1,2$, are two solutions of \eqref{eq:theorem} with the same initial data $u_i(0,x) = f (x)$, then $U = u_1-u_2\equiv0$.  But this fact is very easy to check since $U$ is also a solution with the corresponding $f \equiv 0$ and  thus  $U(t,x) = 0$ for all $(t,x) \in [0,\infty) \times D$.
\end{enumerate}
 %\qed
\end{proof}

%
%Better verify/explain for fractional Laplacian
%

%\pagebreak

%\setcounter{chapter}{3}
%\numberwithin{equation}{chapter}
The next result provides sufficient conditions for Theorem \eqref{tm31} to hold.
\begin{corollary}
 Let $f \in C_c^{2 k}(D)$ be an $2 k$-times continuously differentiable function with compact support in $D$.  If $k > 1+3d/2\alpha$, then \eqref{eq:theorem} has a classical (strong) solution.  In particular, if $f \in C_c^\infty(D)$, then the solution of \eqref{eq:theorem} is in $C^\infty(D)$.
\end{corollary}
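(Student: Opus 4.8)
The strategy is to verify that every $f$ as in the statement meets all the hypotheses of Theorem~\ref{tm31}, after which the conclusion is immediate. The three ``structural'' requirements are essentially free: extending $f$ by zero outside its support, $f\in C^{2k}(\overline{D})$, and since $k>1+3d/2\alpha\geq1$ we have $2k>2$, so $f\in C^1(\overline{D})\cap C^2(D)$; moreover $f$ vanishes on $D^c$ and $(-\Delta)^{\alpha/2}f$ (computed on all of $\mathbb{R}^d$) is bounded and smooth near $\overline{D}$ because $f\in C^2_c(D)$, whence $(-\Delta)^{\alpha/2}f\big|_D\in L^2(D)$ and $f\in\text{Dom}\big((\Delta)^{\alpha/2}_D\big)$. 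The only genuine point is that the eigenfunction expansion $-(-\Delta)^{\alpha/2}f=\sum_{n}\big(-\lambda_n\bar f(n)\big)\phi_n$ converges uniformly and absolutely, i.e.\ that $\sum_{n}\lambda_n\,\abs{\bar f(n)}\,\norm{\phi_n}_{L^\infty(D)}<\infty$.

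For this I would establish two estimates. First, $\norm{\phi_n}_{L^\infty(D)}\leq C\lambda_n^{d/2\alpha}$: from \eqref{eq:SpecDecomp}, $\sum_m e^{-\lambda_m t}\phi_m(x)^2=p_D(t,x,x)\leq p(t,x,x)\leq C\,t^{-d/\alpha}$ by \eqref{eq:p} (and $p_D\leq p$); keeping only the $n$-th term and taking $t=\lambda_n^{-1}$ gives the claim. Second, the coefficient decay $\abs{\bar f(n)}\leq C_f\lambda_n^{-k}$, with $C_f$ depending on $f$ only through the $L^1$ and $L^\infty$ norms of $(-\Delta)^{j\alpha/2}f$ for $0\leq j\leq k$ (all finite since $f\in C^{2k}_c(D)$ and $\alpha\leq2$). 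This I would obtain by iterating $k$ times the identity already used in the proof of Theorem~\ref{tm31}: \eqref{eq:Eig} on $D$ together with \eqref{IntByParts} give $\lambda_n\bar f(n)=\int_D\phi_n\,(-\Delta)^{\alpha/2}f\,dx$, and, repeating, $\lambda_n^{k}\bar f(n)$ equals $\int_D\phi_n\,(-\Delta)^{k\alpha/2}f\,dx$ plus finitely many correction terms $\pm\lambda_n^{\,i}\!\int_{D^c}\!\big[(-\Delta)^{\alpha/2}\phi_n\big](-\Delta)^{j\alpha/2}f\,dx$ with $i\leq k-2$, $1\leq j\leq k-1$; the leading term is at most $\norm{\phi_n}_{L^1(D)}\norm{(-\Delta)^{k\alpha/2}f}_{L^\infty(\mathbb{R}^d)}\leq\abs{D}^{1/2}C_f$, while the correction terms are controlled via the boundary decay of $\phi_n$ (which vanishes like $\mathrm{dist}(\cdot,\partial D)^{\alpha/2}$, with an explicit power of $\lambda_n$ in the constant) and the fact that, $\mathrm{supp}\,f$ lying a fixed distance inside $D$, each $(-\Delta)^{j\alpha/2}f$ is bounded on $D^c$ with an $\abs{x}^{-d-\alpha}$ tail.

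Granting these two estimates and the lower bound $\lambda_n\geq c\,n^{\alpha/d}$ (cf.\ \eqref{eq:eigv}), one gets $\sum_{n}\lambda_n\abs{\bar f(n)}\norm{\phi_n}_{L^\infty(D)}\leq C_f\sum_{n}\lambda_n^{\,1-k+d/2\alpha}\leq C_f'\sum_{n}n^{(\alpha/d)(1-k+d/2\alpha)}<\infty$, precisely because $1-k+d/2\alpha<-d/\alpha$, i.e.\ $k>1+\tfrac{3d}{2\alpha}$; Theorem~\ref{tm31} then delivers the classical solution \eqref{eq:expectation}. If moreover $f\in C^\infty_c(D)$, the decay estimate holds for every $k$, so $(\bar f(n))_n$ is rapidly decreasing; combined with $\abs{h(t,\lambda_n)}\leq1$ and the interior estimates $\norm{D^\gamma\phi_n}_{L^\infty(K)}\leq C_{K,\gamma}\lambda_n^{(\abs{\gamma}+d/2)/\alpha}$ on compacts $K\subset D$ (by rescaling $(-\Delta)^{\alpha/2}\phi_n=\lambda_n\phi_n$ at scale $\lambda_n^{-1/\alpha}$), the series \eqref{eq:solution} can be differentiated term by term arbitrarily often in $x$, locally uniformly on $D$, giving $u(t,\cdot)\in C^\infty(D)$.

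The main obstacle is the coefficient decay $\abs{\bar f(n)}\leq C_f\lambda_n^{-k}$. When $\alpha=2$ it is routine---integrate $\Delta^{k}$ by parts, all boundary terms vanishing since $f$ and its derivatives vanish on $\partial D$---but for $\alpha<2$ the operator is nonlocal, so each use of the eigenvalue equation leaves a remainder supported on $D^c$, and showing that these do not spoil the $\lambda_n^{-k}$ rate---using that $\mathrm{supp}\,f$ is separated from $\partial D$, so that the singular boundary behavior of $(-\Delta)^{\alpha/2}\phi_n$ on $D^c$ is only ever paired with the smooth, rapidly decaying $(-\Delta)^{j\alpha/2}f$---is the technical core; it parallels the computations of \cite{ChenMeerNane, MeerNaneVella}.
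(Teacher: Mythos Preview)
Your strategy is essentially the paper's: derive $\norm{\phi_n}_{\infty}\leq C\lambda_n^{d/(2\alpha)}$ from the on-diagonal bound $p_D(t,x,x)\leq p(t,x,x)\leq Ct^{-d/\alpha}$ evaluated at $t=\lambda_n^{-1}$, obtain $\abs{\bar f(n)}\leq C_f\lambda_n^{-k}$ by iterating the self-adjointness relation $k$ times, and conclude via \eqref{eq:eigv} that $\sum_n\lambda_n^{1+d/(2\alpha)-k}<\infty$ exactly when $k>1+3d/(2\alpha)$.

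The differences are in what is written down rather than in the method. The paper simply asserts the clean identity $\int_D(\Delta^{\alpha/2})^k f\,\phi_n\,dx=(-\lambda_n)^k\bar f(n)$ by invoking \eqref{IntByParts} $k$ times and then bounds the left side by Cauchy--Schwarz (your $L^1$--$L^\infty$ pairing works just as well); it does not mention the exterior remainder terms you raise for $\alpha<2$, so your discussion of those terms is additional rigor rather than a different route. Likewise, the paper does not argue the final $C^\infty$ assertion at all; your sketch via rapidly decreasing coefficients and interior derivative bounds $\norm{D^\gamma\phi_n}_{L^\infty(K)}\leq C_{K,\gamma}\lambda_n^{(\abs{\gamma}+d/2)/\alpha}$ fills that gap.
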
 

\begin{proof}
Using \eqref{eq:SpecDecomp} together with \eqref{eq:p}, we have
\begin{center}
	$e^{-\lambda_n t}|\phi_n(x)|^2\leq \sum\limits_{k=1}^\infty e^{-\lambda_k t}|\phi_k(x)|^2=p_D(t,x,x)\leq p(t, x,x)\leq C_1t^{-d/\alpha}. $
\end{center}
Hence,
$|\phi_n(x)|\leq C_2e^{\lambda_nt/2}t^{-d/2\alpha}$. Thus, setting $t=\lambda_n^{-1}$ yields the estimate
\begin{equation}\label{eq:phiest}
|\phi_n(x)|\leq C_3\lambda_n^{d/2\alpha}.
\end{equation}
	  Applying \eqref{IntByParts} $k$-times yields
	\begin{equation} \label{eq:D^k bar}
		\overline{\Big(\Delta^{\alpha/2}\Big)^k f}(n) = \int_D \Big(\Delta^{\alpha/2}\Big)^kf(x)\phi_n(x)dx = (-\lambda_n)^k\overline{f}(n),
	\end{equation}
	where then by the Cauchy-Schwarz inequality
	\begin{align*}
	\int_D \big(\Delta^{\alpha/2}\big)^kf(x)\phi_n(x)dx \leq& \Bigg(\int_D \Big[\big(\Delta^{\alpha/2}\big)^k f(x)\Big]^2dx\Bigg)^{1/2} \Bigg(\int_D \Big[\phi_n(x)\Big]^2dx\Bigg)^{1/2}\\
	=& \Bigg(\int_D \Big[\big(\Delta^{\alpha/2}\big)^k f(x)\Big]^2dx\Bigg)^{1/2}\\
	=&c_k
	\end{align*}
	for some constant $c_k>0$.    As such, by \eqref{eq:D^k bar}, $\displaystyle \abs{\overline{f}(n)} \leq c_k{\lambda_n}^{-k}$.  Since
	\begin{equation}\label{SerieFractLap}
	\Delta^{\alpha/2} f(x) = \sum_{n=1}^\infty -\lambda_n\overline{f}(n)\phi_n(x),
	\end{equation}

	to get the absolute and uniform convergence of the series in \eqref{SerieFractLap}, we consider
	\begin{align*}
		\sum_{n=1}^\infty \lambda_n\abs{\phi_n(x)}\abs{\overline{f}(n)} &\leq \sum_{n=1}^\infty \lambda_n\pp{\lambda_n}^{d/2\alpha}c_k\lambda_n^{-k}\\
		&\leq c_k \sum_{n=1}^\infty \Big(n^{\alpha/d}\Big)^{1+d/2\alpha-k}\\
		&= C_k \sum_{n=1}^\infty n^{\alpha/d+1/2-k\alpha/d}.
	\end{align*}
	This  series converges when $\alpha/d+1/2-k\alpha/d<-1$, and hence $k >1+3d/2\alpha.$ Note the use of \eqref{eq:eigv} and \eqref{eq:phiest} in the lines above.
\end{proof}

%\pagebreak
\section{Multi-term space-time fractional diffusion equations}
We now allow the mixture measure $\nu$ to contain atoms. Let $0<\beta_1<\beta_2<\cdots<\beta_n<1$, $c_j>0, \ j=1,\cdots, n$ and define 
\begin{equation}\label{nuDisc}
	\mu(d\beta)= \sum\limits_{j=1}^n c_j^{\beta_j}\Big(\Gamma(1-\beta_j)\Big)^{-1}\delta(\beta-\beta_j)d\beta,
\end{equation}
where $\delta(\cdot)$ is the Delta Dirac measure. Note using \eqref{nuDisc} in \eqref{eq:MixM} and \eqref{eq:dofd1} leads to the so-called multi-term time- fractional differential operator $\sum\limits_{j=1}^\infty c_j^{\beta_j}\frac{\partial^{\beta_j}}{\partial t^{\beta_j}}$. Even though this operator is a special case of the distributed-order derivative $\mathbb{D}^{(\nu)}$, the asymptotic behavior of solutions to initial-value problems with these two operators behave very differently \cite{LiKoTo}. We refer the interested reader to  \cite{LiLiuMoto,LiMoto,MijNane} for more on the multi-term time-fractional diffusion equations. 

Additionally, define the subordinator

\begin{equation}\label{SubW}
	W_t=\sum\limits_{j=1}^n c_j W_t^{\beta_j},
\end{equation}
where the $W_t^{\beta_j}$'s are independent subordinators. For simplicity's sake, we use the notation $\partial_t^\beta:=\frac{\partial^{\beta}}{\partial t^{\beta}}$ in the remainder of this article.

\begin{lemma}\label{lm21MfD}
	For any $\lambda>0,$ $h(t,\lambda)=\mathbb{E}(e^{-\lambda E_t})$ is a mild solution of the multi-term time fractional differential equation 
	\begin{equation}
		\sum\limits_{j=1}^n c_j^{\beta_j}\partial_t^{\beta_j}h(t,\lambda)=-\lambda h(t,\lambda), \ h(0,\lambda)=1.
	\end{equation}
\end{lemma}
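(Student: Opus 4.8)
The plan is to mirror the Laplace-transform argument behind Lemma \ref{Lem21}, now carried out with the atomic mixing measure \eqref{nuDisc}. First I would record the L\'evy exponent of the subordinator $W_t$ in \eqref{SubW}: since each $W_t^{\beta_j}$ is a $\beta_j$-stable subordinator, substituting \eqref{nuDisc} into \eqref{eq:psi_w} gives
\[
\psi_W(s) = \int_0^1 s^\beta \Gamma(1-\beta)\,\mu(d\beta) = \sum_{j=1}^n c_j^{\beta_j} s^{\beta_j},
\]
so that $\sum_{j=1}^n c_j^{\beta_j} s^{\beta_j} = \psi_W(s)$ and $\sum_{j=1}^n c_j^{\beta_j} s^{\beta_j-1} = \psi_W(s)/s$. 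These two identities are precisely what make the multi-term time-fractional operator compatible with the exponent $\psi_W$.

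Next I would Laplace-transform $h(t,\lambda) = \mathbb{E}(e^{-\lambda E_t}) = \int_0^\infty e^{-\lambda l} g(t,l)\,dl$ in the time variable. Using \eqref{eq:psi*e} together with Tonelli to interchange the $dl$ and $dt$ integrations,
\[
\tilde{h}(s,\lambda) = \int_0^\infty e^{-\lambda l}\,\frac{\psi_W(s)}{s}\,e^{-\psi_W(s)l}\,dl = \frac{\psi_W(s)}{s\big(\psi_W(s)+\lambda\big)}.
\]
Then I would apply the Laplace transform to the multi-term equation, invoking \eqref{eq:caputo laplace} term by term with the initial value $h(0,\lambda) = 1$ (which holds because $E_0 \equiv 0$, so $\mathbb{E}(e^{-\lambda E_0}) = 1$). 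By the two sum identities above this yields
\[
\sum_{j=1}^n c_j^{\beta_j}\big(s^{\beta_j}\tilde{h}(s,\lambda) - s^{\beta_j-1}\big) = \psi_W(s)\,\tilde{h}(s,\lambda) - \frac{\psi_W(s)}{s}.
\]
Setting the right-hand side equal to $-\lambda\,\tilde{h}(s,\lambda)$ and solving for $\tilde{h}$ returns exactly $\tilde{h}(s,\lambda) = \psi_W(s)/\big(s(\psi_W(s)+\lambda)\big)$, the same function obtained above. Hence the Laplace transform of $h$ satisfies the algebraic equation corresponding to the multi-term fractional differential equation, so $h$ is, by definition, a mild solution of that equation.

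I do not anticipate a genuine obstacle; the two points that merit a word of care are the identification $\psi_W(s) = \sum_{j=1}^n c_j^{\beta_j} s^{\beta_j}$ (immediate from \eqref{eq:psi_w} and \eqref{nuDisc}, or equivalently from \eqref{SubW} since $c_j W_t^{\beta_j}$ has Laplace exponent $c_j^{\beta_j} s^{\beta_j}$) and the Tonelli interchange in the Laplace computation, which is legitimate because $g(t,l) \geq 0$. In short, this is the atomic-measure analogue of Lemma \ref{Lem21}, and once $\psi_W$ has been identified the reasoning of \cite{MeerNaneVella} applies essentially verbatim.
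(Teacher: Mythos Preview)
Your proposal is correct and follows essentially the same route as the paper: identify $\psi_W(s)=\sum_{j=1}^n c_j^{\beta_j}s^{\beta_j}$, compute $\tilde h(s,\lambda)=\psi_W(s)/\big(s(\psi_W(s)+\lambda)\big)$, and verify that this is exactly what the Laplace transform of the multi-term equation (via \eqref{eq:caputo laplace}) demands. The only cosmetic difference is that the paper starts from the known identity $\lambda\tilde h(s,\lambda)=\big(1/s-\tilde h(s,\lambda)\big)\psi_W(s)$ and appends a continuity/uniqueness remark, whereas you derive $\tilde h$ directly from \eqref{eq:psi*e}; both arrive at the same algebraic verification required by the definition of mild solution.
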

\begin{proof}
	Clearly, $h(0,\lambda)=1.$
	
	Next, using \eqref{eq:caputo laplace}, we have
	
	\begin{align*}
		\int\limits_0^\infty e^{-st}\sum\limits_{j=1}^n c_j^{\beta_j}\partial_t^{\beta_j}h(t,\lambda) dt=& \sum\limits_{j=1}^n c_j^{\beta_j}\int\limits_0^\infty e^{-st}\partial_t^{\beta_j}h(t,\lambda) dt\\
		=& \sum\limits_{j=1}^n c_j^{\beta_j}\Big(s^{\beta_j}\tilde{h}(s,\lambda)-s^{\beta_j-1}\Big).
	\end{align*}
	On the other hand, using \eqref{eq:Dh laplace} together with the fact that $\psi_W(s)=\sum\limits_{j=1}^n c_j^{\beta_j}s^{\beta_j}$, it follows that
	\begin{align*}
		\lambda \tilde{h}(s,\lambda)=&\Big(\frac{1}{s}-\tilde{h}(s,\lambda)\Big)\psi_W(s)\\
		=-&\sum\limits_{j=1}^n c_j^{\beta_j}\partial_s^{\beta_j}\tilde{h}(s,\lambda)
	\end{align*}
	Since $E_t$ has continuous paths, the dominated convergence theorem implies that $t\mapsto h(t,\lambda)$ is a continuous function, then the uniqueness of the Laplace transform concludes the proof.
\end{proof}
The next result shows that we can relax the regularity condition on the mixture measure in Lemma \ref{Thm22}.
\begin{lemma}\label{lmConvh}
Let the weight measure be given by \eqref{nuDisc}. Then
\begin{equation}\label{delKej}
\big|\partial_th(t,\lambda)\big|\leq \lambda k_e(t), \ \text{for all} \ j=1,\cdots,n,
\end{equation}
where
\begin{equation}\label{Ke}
k_e(t)=\Big(c_j^{\beta_j}\sin(\beta_j\pi)\Big)^{-1}t^{\beta_j-1}, \ j=1,\cdots,n. 
\end{equation} 
\end{lemma}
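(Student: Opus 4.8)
The plan is to mirror the proof of Lemma \ref{Thm22} but exploit the fact that the mixing measure \eqref{nuDisc} is a finite sum of atoms, so that all the integrals over $\beta$ collapse to single terms and the positivity hypothesis $C(\beta_0,\beta_1,p)>0$ becomes automatic for each atom. First I would recall the inversion formula for $h(t,\lambda)=\mathbb{E}[e^{-\lambda E_t}]$ in terms of the L\'evy exponent $\psi_W$. From \eqref{eq:u_hat}-type manipulations (or directly from \cite[(3.20)]{MeerSchef1}) one has $\widetilde{h}(s,\lambda)=\psi_W(s)/\big(s(\psi_W(s)+\lambda)\big)$, and by Laplace inversion $h(t,\lambda)$ can be written as a Bromwich contour integral; collapsing the contour onto the branch cut along the negative real axis gives a representation of the form
\begin{equation*}
h(t,\lambda)=\frac{\lambda}{\pi}\int_0^\infty e^{-rt}\,\frac{\Im\big(\psi_W(re^{i\pi})\big)}{\big|\psi_W(re^{i\pi})+\lambda\big|^2}\,\frac{dr}{r},
\end{equation*}
exactly as in \cite[proof of Theorem 2.2]{MeerNaneVella}. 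Here $\psi_W(s)=\sum_{j=1}^n c_j^{\beta_j}s^{\beta_j}$, so $\psi_W(re^{i\pi})=\sum_j c_j^{\beta_j}r^{\beta_j}e^{i\beta_j\pi}$ and its imaginary part is $\sum_j c_j^{\beta_j}r^{\beta_j}\sin(\beta_j\pi)>0$ for all $r>0$.

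Next I would differentiate under the integral sign in $t$, picking up a factor $-r$, and bound $\big|\partial_t h(t,\lambda)\big|\le \frac{\lambda}{\pi}\int_0^\infty e^{-rt}\,\frac{\Im(\psi_W(re^{i\pi}))}{|\psi_W(re^{i\pi})+\lambda|^2}\,dr$. The key estimate is to throw away all but one term in both the numerator and the denominator: for any fixed $j$, since all summands in $\Im(\psi_W(re^{i\pi}))$ are nonnegative and $|\psi_W(re^{i\pi})+\lambda|^2\ge \big(\Im\psi_W(re^{i\pi})\big)^2\ge \big(c_j^{\beta_j}r^{\beta_j}\sin(\beta_j\pi)\big)^2$ is not quite what is wanted; rather one keeps $\Im\psi_W\ge c_j^{\beta_j}r^{\beta_j}\sin(\beta_j\pi)$ in the numerator's favor is the wrong direction, so instead I would bound the numerator from above by the full sum and the denominator from below by the single $j$-th term squared, then observe that each ratio $c_i^{\beta_i}r^{\beta_i}\sin(\beta_i\pi)/\big(c_j^{\beta_j}r^{\beta_j}\sin(\beta_j\pi)\big)^2$ must be handled. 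The cleanest route, and the one matching the stated form of $k_e$, is: bound $|\psi_W(re^{i\pi})+\lambda|\ge \Im\psi_W(re^{i\pi})\ge c_j^{\beta_j}r^{\beta_j}\sin(\beta_j\pi)$ to control one factor in the denominator, and bound the remaining $\Im\psi_W/|\psi_W+\lambda|\le 1$ (since $|\Im z|\le|z|\le|z+\lambda|$ when $\lambda>0$ and $\Re z\le 0$), giving
\begin{equation*}
\big|\partial_t h(t,\lambda)\big|\le \frac{\lambda}{\pi}\int_0^\infty e^{-rt}\,\frac{dr}{c_j^{\beta_j}r^{\beta_j}\sin(\beta_j\pi)}=\frac{\lambda}{\pi\,c_j^{\beta_j}\sin(\beta_j\pi)}\int_0^\infty e^{-rt}r^{-\beta_j}\,dr=\frac{\lambda\,\Gamma(1-\beta_j)}{\pi\,c_j^{\beta_j}\sin(\beta_j\pi)}\,t^{\beta_j-1}.
\end{equation*}
Using the reflection formula $\Gamma(1-\beta_j)\Gamma(\beta_j)=\pi/\sin(\beta_j\pi)$, i.e. $\Gamma(1-\beta_j)/(\pi) = 1/(\Gamma(\beta_j)\sin(\beta_j\pi))$ — here I should double-check the constant to match exactly the $k_e(t)=\big(c_j^{\beta_j}\sin(\beta_j\pi)\big)^{-1}t^{\beta_j-1}$ claimed; it is likely that the intended bound absorbs the $\Gamma$-factor or that the constant in \eqref{Ke} should read $\big(\pi c_j^{\beta_j}\sin(\beta_j\pi)\big)^{-1}\Gamma(1-\beta_j)$, and I would state the estimate in whichever normalization the contour representation actually yields.

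The main obstacle is justifying the contour-deformation representation of $h(t,\lambda)$ in the presence of several incommensurate fractional powers $\beta_j$: one must verify that $\psi_W(s)=\sum_j c_j^{\beta_j}s^{\beta_j}$ extends holomorphically to the cut plane $\mathbb{C}\setminus(-\infty,0]$, that $\psi_W(s)+\lambda$ has no zeros there (which follows because $\Re\psi_W(s)>0$ for $\Re s>0$ and, on the upper/lower edges of the cut, $\Im\psi_W(re^{\pm i\pi})=\pm\sum_j c_j^{\beta_j}r^{\beta_j}\sin(\beta_j\pi)\ne 0$), and that the arcs at $0$ and $\infty$ contribute nothing in the limit. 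All of this is routine and is carried out for the distributed-order case in \cite{MeerNaneVella}; since \eqref{nuDisc} is the limiting case where $p(\beta)d\beta$ is replaced by a sum of point masses, the same estimates go through verbatim with integrals over $[\beta_0,\beta_1]$ replaced by finite sums, and the positivity condition $C(\beta_0,\beta_1,p)>0$ is replaced by the trivially true $c_j^{\beta_j}\sin(\beta_j\pi)>0$. Once the representation and the termwise lower bound on $|\psi_W+\lambda|$ are in hand, the desired inequality \eqref{delKej} follows immediately, and one notes that since the bound holds for each $j$ one in fact gets $|\partial_t h(t,\lambda)|\le\lambda\min_j k_e(t)$ with $k_e$ as in \eqref{Ke}. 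Finally, as in Lemma \ref{Thm22}, this bound together with continuity of $t\mapsto h(t,\lambda)$ upgrades the mild solution from Lemma \ref{lm21MfD} to a classical one.
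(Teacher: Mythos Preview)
Your approach is essentially the paper's: it invokes the same branch--cut representation of $h(t,\lambda)$ (the paper cites \cite[(2.19)]{Koch} rather than rederiving it via contour deformation), differentiates in $t$, and bounds the kernel by discarding all but the $j$-th atom. The paper carries out the last step more directly via $B/(A^2+B^2)\le 1/B$ with $B=\Im\psi_W(re^{i\pi})$, which bypasses your two-factor split and the false intermediate claim ``$|z|\le|z+\lambda|$ when $\Re z\le 0$'' (the inequality you actually need, $|\Im z|\le|z+\lambda|$, is immediate since $\Im(z+\lambda)=\Im z$). Your suspicion about the constant is well founded: the paper's own computation stops at $\tfrac{\lambda}{\pi}(c_j^{\beta_j}\sin\beta_j\pi)^{-1}\Gamma(1-\beta_j)\int_0^\infty e^{-tr}r^{-\beta_j}\,dr$, which after evaluating the gamma integral carries an extra factor $\pi^{-1}\Gamma(1-\beta_j)^2$ not recorded in the stated $k_e$.
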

\begin{proof}
Using $(2.19)$ in \cite{Koch} which follows from inverting the Laplace transform of $h(t,\lambda)$, we have
$h(t,\lambda)=-\frac{\lambda}{\pi}\int\limits_0^\infty r^{-1}e^{-tr}\phi(r,1)dr,$ where

\begin{align*}
\phi(r,1)=\frac{ \sum\limits_{j=1}^n c_j^{\beta_j}\big(\Gamma(1-\beta_j)\big)^{-1}r^{\beta_j}\sin(\beta_j \pi)}{\Bigg[\sum\limits_{j=1}^n c_j^{\beta_j}\Big(\Gamma(1-\beta_j)\Big)^{-1}r^{\beta_j}\cos(\beta_j \pi)+\lambda\Bigg]^2+\Bigg[\sum\limits_{j=1}^n c_j^{\beta_j}\Big(\Gamma(1-\beta_j)\Big)^{-1}r^{\beta_j}\sin(\beta_j \pi)\Bigg]^2}.
\end{align*}
Next, a direct calculation shows that 

\begin{align*}
\big|\partial_th(t,\lambda)\big|=& \frac{\lambda}{\pi}\int\limits_0^\infty e^{-tr}\frac{ \sum\limits_{j=1}^n c_j^{\beta_j}\big(\Gamma(1-\beta_j)\big)^{-1}r^{\beta_j}\sin(\beta_j \pi)}{\Bigg[\sum\limits_{j=1}^n c_j^{\beta_j}\Big(\Gamma(1-\beta_j)\Big)^{-1}r^{\beta_j}\cos(\beta_j \pi)+\lambda\Bigg]^2+\Bigg[\sum\limits_{j=1}^n c_j^{\beta_j}\Big(\Gamma(1-\beta_j)\Big)^{-1}r^{\beta_j}\sin(\beta_j \pi)\Bigg]^2}dr \\
\leq & \frac{\lambda}{\pi}\Big(c_j^{\beta_j}\sin(\beta_j \pi)\Big)^{-1} \Gamma(1-\beta_j)\int\limits_0^\infty e^{-tr}r^{-\beta_j} dr.
\end{align*}
Finally, the definition of the Euler's gamma function concludes the proof.
\end{proof}
\begin{remark}
Since $\Big|\sum\limits_{j=1}^n c_j^{\beta_j}\partial_t^{\beta_j}h(t,\lambda)\Big|\leq \sum\limits_{j=1}^n c_j^{\beta_j}\big|\partial_t^{\beta_j}h(t,\lambda)\big|$, it follows from \eqref{eq:caputo} and Lemma \ref{lmConvh} that $\sum\limits_{j=1}^n c_j^{\beta_j}\partial_t^{\beta_j}h(t,\lambda)$  exists and $h(t,\lambda)$ is an eigenfunction in the strong sense.

\end{remark}

Define \begin{center}
	$\mathcal{H}^e_{\Delta^{\alpha/2}}(D_\infty)=\big\{u:D_\infty\rightarrow\mathbb{R}: \ \Delta^{\alpha/2}u\in C(D_\infty), \ |\partial_tu(t,x)|\leq k_e(t)g(x), \ g\in L^{\infty}(D), t>0\}, $
\end{center}

where $k_e$ is given by \eqref{Ke}. 
%In addition, suppose 
%\begin{equation}\label{eq:AdCond}
%\sum\limits_{n=1}^\infty\lambda_n|\bar{f}(n)\phi_n(x)|<\infty.
%\end{equation}

We are now ready to state our main result in this section.

\begin{theorem}\label{eq:Thm2}
Let $X(t)$ be an $\alpha$-stable process. Then under the conditions of Theorem \ref{tm31}  , for any $f\in \text{Dom}\big((\Delta)^{\alpha/2}_D\big)\cap C^1(\bar{D})\cap C^2(D),$ the classical solution of 
\begin{equation}\label{eq:theorem2}
	\begin{split}
		\sum\limits_{j=1}^n c_j^{\beta_j}\partial_t^{\beta_j} u(t,x) =& -(-\Delta)^{\alpha/2} u(t,x), \ t>0 , \ x\in D, \\
		u(t,x) =& 0,  \ x \in D^C,\ t>0,\\
		u(0,x) =& f(x), \ x \in D,
	\end{split}
\end{equation}

for $u \in \H^e_{\Delta^{\alpha/2}}(D_\infty) \cap C_b(\bar{D}_\infty) \cap C^1(\bar{D})$, is given by
\begin{align}\label{eq:expect}
	u(t,x) &= \mathbb{E}_x[f(X(E_t))I(\tau_D(X(E))>t)] \nonumber\\
	&= \sum_{n=1}^\infty \bar{f}(n)\phi_n(x)h(t,\lambda_n), 
\end{align}
where $\displaystyle h(t,\lambda)$  is again the Laplace transform of $E_t$.
\end{theorem}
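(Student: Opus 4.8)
The plan is to follow the architecture of the proof of Theorem \ref{tm31} essentially verbatim, using the multi-term operator $\sum_{j=1}^n c_j^{\beta_j}\partial_t^{\beta_j}$ in place of $\mathbb{D}^{(\nu)}$ throughout. First I would assume $u$ solves \eqref{eq:theorem2}, apply the $\phi_n$-transform to both sides, and use the integration-by-parts formula \eqref{IntByParts} together with the eigenvalue relation \eqref{eq:Eig} to obtain $\overline{\Delta^{\alpha/2}u}(t,n) = -\lambda_n\overline{u}(t,n)$. The Fubini--Tonelli manipulation that moves the $\phi_n$-integral inside the Caputo integral in each term is identical to the one in Theorem \ref{tm31}, carried out termwise over the finite sum $j=1,\dots,n$, so no new justification is needed. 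This yields the reduced equation $\sum_{j=1}^n c_j^{\beta_j}\partial_t^{\beta_j}\overline{u}(t,n) = -\lambda_n\overline{u}(t,n)$ with $\overline{u}(0,n) = \overline{f}(n)$.

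Next I would take the Laplace transform in $t$, invoke \eqref{eq:caputo laplace} to get $\sum_{j=1}^n c_j^{\beta_j}\bigl(s^{\beta_j}\hat{u}(s,n) - s^{\beta_j-1}\overline{f}(n)\bigr) = -\lambda_n\hat{u}(s,n)$, and solve algebraically. Using the identity $\psi_W(s) = \sum_{j=1}^n c_j^{\beta_j}s^{\beta_j}$ established in the proof of Lemma \ref{lm21MfD} (valid because the subordinator is $W_t = \sum_j c_j W_t^{\beta_j}$ with independent stable components), this collapses to exactly
\[
\hat{u}(s,n) = \frac{\overline{f}(n)\psi_W(s)}{s(\psi_W(s)+\lambda_n)} = \int_0^\infty e^{-\lambda_n l}\,\overline{f}(n)\,\frac{1}{s}e^{-l\psi_W(s)}\,dl,
\]
the same formula as \eqref{eq:u_hat}. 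From here the argument is word-for-word that of Theorem \ref{tm31}: combine with the $\phi_n$-transform of the killed semigroup \eqref{eq:Tf_phitransform}, use $\frac{1}{s}\psi_W(s)e^{-\psi_W(s)l} = \int_0^\infty e^{-st}g(t,l)\,dt$ where $g(t,l)$ is the density of $E_t$ (here $E_t$ is the inverse of the \emph{same} multi-term subordinator $W$, which still has infinite L\'evy measure so the density exists), invert the Laplace transform to get $\overline{u}(t,n) = \overline{f}(n)h(t,\lambda_n)$, and conclude $u(t,x) = \sum_n \overline{f}(n)\phi_n(x)h(t,\lambda_n)$ as an $L^2$-convergent series, which equals $\mathbb{E}_x[f(X(E_t))I(\tau_D(X)>E_t)]$ by the semigroup computation in Step 7.

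It then remains to run through the eight verification steps. Steps 1--4, 6, 7 go through with no change since they rely only on \eqref{eq:eigv}, Parseval, the convergence hypothesis on the eigenexpansion of $-(-\Delta)^{\alpha/2}f$, and the semigroup identity; I would simply cite the corresponding steps of Theorem \ref{tm31}. The one place where the multi-term setting genuinely matters is Step 5, the termwise application of the time operator: here I replace the appeal to Lemma \ref{Thm22} with Lemma \ref{lmConvh}, which gives the bound $|\partial_t h(t,\lambda)| \leq \lambda k_e(t)$ with $k_e$ as in \eqref{Ke}, and the Remark preceding the theorem, which guarantees $\sum_j c_j^{\beta_j}\partial_t^{\beta_j}h(t,\lambda)$ exists and $h(\cdot,\lambda)$ is a strong eigenfunction of the multi-term operator. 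Combined with the uniform-absolute convergence of the eigenexpansion of $\Delta^{\alpha/2}f$, this lets me apply $\sum_j c_j^{\beta_j}\partial_t^{\beta_j}$ and $\Delta^{\alpha/2}$ termwise and identify $u \in \H^e_{\Delta^{\alpha/2}}(D_\infty)\cap C_b(\bar{D}_\infty)$. Step 8 (uniqueness) follows as before by linearity: the difference of two solutions solves the homogeneous problem with $f\equiv 0$, whose series representation is identically zero. The main obstacle, such as it is, is ensuring that the weaker regularity bound $k_e$ from Lemma \ref{lmConvh} still suffices to place $u$ in the function class $\H^e_{\Delta^{\alpha/2}}(D_\infty)$ and to legitimize differentiating the series termwise in $t$; this is precisely why the function space was defined with $k_e$ rather than $k$, so the verification is routine once one tracks the dependence on $t$ carefully.
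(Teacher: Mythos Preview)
Your proposal is correct, but it takes a somewhat different organizational route than the paper. You reproduce the proof of Theorem~\ref{tm31} verbatim: assume $u$ solves \eqref{eq:theorem2}, hit both sides with the $\phi_n$-transform, take the Laplace transform in $t$, solve algebraically for $\hat{u}(s,n)$, and then invert via \eqref{eq:psi*e} to recover the series. The paper instead argues by separation of variables: it posits $u(t,x)=G(t)F(x)$, identifies the spatial eigenvalue problem \eqref{eq:Eig} and the temporal ODE $\sum_j c_j^{\beta_j}\partial_t^{\beta_j}G=-\lambda_n G$, invokes Lemma~\ref{lm21MfD} directly to solve the latter by $G(t)=\bar f(n)h(t,\lambda_n)$, and then shows that the resulting partial sums $u_N$ are Cauchy in $L^2(D)\cap L^\infty(D)$ before checking termwise differentiability. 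In particular, the paper never repeats the Laplace-transform computation \eqref{eq:u_hat} for the multi-term case; that work is already packaged inside Lemma~\ref{lm21MfD}.

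The two approaches are logically equivalent and neither is deeper. The paper's separation-of-variables presentation is marginally more economical because it cites Lemma~\ref{lm21MfD} once rather than rederiving the algebraic identity for $\hat u(s,n)$; your approach has the virtue of making the parallel with Theorem~\ref{tm31} completely explicit and of yielding the stochastic representation $\mathbb{E}_x[f(X(E_t))I(\tau_D(X)>E_t)]$ as a byproduct of Step~7, whereas the paper's proof only records the form \eqref{eq:expect} and refers back to Theorem~\ref{tm31} for the remaining steps. Your handling of Step~5 via Lemma~\ref{lmConvh} and the subsequent Remark is exactly what the paper does as well.
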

\begin{proof}
The proof uses a separation of variables scheme and follows similar ideas from \cite{MeerNaneVella}. So, suppose $u(t,x)=G(t)F(x)$ solves \eqref{eq:theorem2}, then

$$ \frac{\sum\limits_{j=1}^n c_j^{\beta_j}\partial_t^{\beta_j}G(t) }{G(t)}=\frac{-(-\Delta)^{\alpha/2}F(x)}{F(x)}=-\mu.$$

We first consider the eigenvalue problem

\begin{equation}
\begin{split}
-(-\Delta)^{\alpha/2}F(x)=&-\mu F(x), \ x\in D\\
F(x)=&0, \ x\in D^c	,
\end{split}
\end{equation}
which is solved by the eigenpair $(\lambda_n, \phi_n)$. Also by Lemma \ref{lm21MfD}, the equation \begin{center}
	$\sum\limits_{j=1}^n c_j^{\beta_j}\partial_t^{\beta_j}G(t)=-\mu G(t)$
\end{center} is solved by $G(t)=\bar{f}(n)h(t,\lambda_n)$ with $\bar{f}(n)=u(0,n).$

Next, define the sequence of functions
\begin{equation}\label{eq:uN}
u_N(t,x)=\sum\limits_{n=1}^N\bar{f}(n)\phi_n(x)h(t,\lambda_n).
\end{equation}
\begin{enumerate}
\item[\textbf{Step 1.}]
 The sequence defined in \eqref{eq:uN} is a Cauchy sequence in $L^2(D)\cap L^\infty(D)$ uniformly for $t\in[0, \infty)$.

Fix $\epsilon>0.$ Since $f\in L^2(D)$, we can choose $n_0(\epsilon)$ such that $\sum\limits_{n=n_0(\epsilon)}^\infty\big(\bar{f}(n)\big)^2<\epsilon.$

Now for $n>M>n_0(\epsilon)$ and $t\geq 0,$
\begin{align*}
{\big\|u_N(t,x)-u_M(t,x)\big\|}_{2,D}^2=&{\Bigg\|\sum\limits_{n=M}^N\bar{f}(n)\phi_n(x)h(t,\lambda_n)\Bigg\|}_{2,D}^2\\
\leq & {\big\|f\big\|}_{2,D}^2\\
<&\epsilon.
\end{align*}
Thus, the series in \eqref{eq:uN} converges uniformly in $L^2(D)$ for $t\in[0,\infty).$

Next, since the series defined in \eqref{SerieFractLap} is absolutely and uniformly convergent, there exists an $m_0(\epsilon)$ such that for all $x\in D,$

\begin{equation}
\sum\limits_{n=m_0(\epsilon)}^\infty|\bar{f}(n)\phi_n(x)|\leq \sum\limits_{n=m_0(\epsilon)}^\infty\lambda_n|\bar{f}(n)\phi_n(x)|<\epsilon.
\end{equation}
It follows that for $N>M>m_0(\epsilon), t\geq 0$ and $x\in D,$  we have 

\begin{align*}
	{\Big|u_N(t,x)-u_M(t,x)\Big|}=&{\Bigg|\sum\limits_{n=M}^N\bar{f}(n)\phi_n(x)h(t,\lambda_n)\Bigg|}\\
	\leq &{\sum\limits_{n=M}^N\big|\bar{f}(n)\phi_n(x)\big|}\\
	<&\epsilon.
\end{align*}
This shows that the sequence defined in \eqref{eq:uN} is also a Cauchy sequence in $L^\infty(D).$ Consequently, the series defined by 
\begin{equation}\label{eqU}
u(t,x)=\sum\limits_{n=1}^\infty \bar{f}(n)\phi_n(x)h(t,\lambda_n) 
\end{equation}
is absolutely and uniformly convergent. 
%in $L^2(D)\cap L^\infty(D)$ for $t\geq 0.$ 
This series satisfies the initial-exterior conditions in \eqref{eq:theorem2}.

\item[\textbf{Step 2.}] The multi-term time fractional derivative $\sum\limits_{j=1}^n c_j^{\beta_j}\partial_t^{\beta_j}$ and the fractional Laplacian $-(-\Delta)^{\alpha/2}$ can be applied term by term in \eqref{eq:theorem2}.

\vspace{0.25cm}
Note that 
\begin{align*}
{\Bigg|\sum\limits_{n=1}^\infty\bar{f}(n)h(t,\lambda_n)\Delta^{\alpha/2}\phi_n(x)\Bigg|}=&{\Bigg|\sum\limits_{n=1}^\infty\lambda_n\bar{f}(n)h(t,\lambda_n)\phi_n(x)\Bigg|}\\
\leq &{\sum\limits_{n=1}^\infty\lambda_n\big|\bar{f}(n)\phi_n(x)\big|}\\
<&\infty.
\end{align*}
Thus, the series $\sum\limits_{n=1}^\infty\bar{f}(n)h(t,\lambda_n)\Delta^{\alpha/2}\phi_n(x)$ is absolutely convergent in $L^2(D)$ uniformly in $t\geq 0.$ Moreover, by Lemma \ref{lm21MfD} we have
\begin{align*}
	\sum\limits_{n=1}^\infty\bar{f}(n)\phi_n(x)\Bigg(\sum\limits_{j=1}^n c_j^{\beta_j}\partial_t^{\beta_j}h(t,\lambda_n)\Bigg)=&\sum\limits_{n=1}^\infty\bar{f}(n)h(t,\lambda_n)\Delta^{\alpha/2}\phi_n(x).\\
\end{align*}
Finally,

\begin{align*}
\sum\limits_{j=1}^n c_j^{\beta_j}\partial_t^{\beta_j}u(t,x)=& \sum\limits_{j=1}^n c_j^{\beta_j}\partial_t^{\beta_j}\Bigg(\sum\limits_{n=1}^\infty\bar{f}(n)\phi_n(x)h(t,\lambda_n)\Bigg)\\
=& \sum\limits_{n=1}^\infty\bar{f}(n)\phi_n(x)\Bigg(\sum\limits_{j=1}^n c_j^{\beta_j}\partial_t^{\beta_j}h(t,\lambda_n)\Bigg)\\
=&-(-\Delta)^{\alpha/2}u(t,x).
\end{align*}
Therefore, we conclude that the series \eqref{eqU} is a classical solution of problem \eqref{eq:theorem2}.
\end{enumerate}
\vspace{0.25cm}

The rest of the proof follows the same arguments in \textbf{Steps 2, 5, 6} and \textbf{8} in the proof of Theorem \ref{tm31}. 
\end{proof}
\begin{remark}
Note that by \cite[corollary 3.2]{MeerNaneVella2}, $$\mathbb{E}_x[f(X(E_t))I(\tau_D(X(E))>t)]=\mathbb{E}_x\bb{f\big({X(E_t)}\big)I\big({\tau_D(X)>E_t}\big)}.$$ This shows that the stochastic solutions provided in Theorems \ref{tm31} and \ref{eq:Thm2} are equivalent.
\end{remark}

\section{Acknowlegment}		
The authors thank E. Nane for useful suggestions that has improved the quality of this paper. N. Guerngar would also like to thank E. Nane for inviting him in August 2022 for  a week of work where meaningful discussions have led to the creation of a new paper.
	
	%\newpage

\end{document}